\newtheorem{theorem}{Theorem}
\newtheorem{lemma}[theorem]{Lemma} 
\newtheorem{corollary}[theorem]{Corollary}
\theoremstyle{definition} 
\newtheorem*{definition}{Definition} 
\newtheorem*{conjecture}{Conjecture} 
\newtheoremstyle{named}%
  {}{}						
  {\upshape}				
  {0pt}{\bfseries}			
  {.}						
  {.5em}					
  {\thmname{#1}\thmnote{ #3}}  
\theoremstyle{named}
\newcommand{\wo}{\setminus}
\newcommand{\set}[1]{\left\{{#1}\right\}} 
\newcommand{\setof}[2]{\left\{{#1}\,:\,{#2}\right\}}
\newcommand{\of}{\subseteq}
\newcommand{\0}{\emptyset}
\newcommand{\1}{\mathds{1}}
\newcommand{\given}{\mathop{|}}
\newcommand{\R}{\mathbb{R}}
\renewcommand{\P}{\mathbb{P}}
\newcommand{\E}{\mathbb{E}}
\renewcommand{\H}{\mathcal{H}}
\renewcommand{\l}{\lambda}
\renewcommand{\L}{\Lambda}
\renewcommand{\a}{\alpha}
\newcommand{\denom}{1+6\l+6\l^2}
\DeclareMathOperator{\Ind}{Ind}
\DeclareMathOperator{\ind}{ind}
\DeclareMathOperator{\LP}{LP}
\DeclareMathOperator{\LPP}{LP^{+}}
\DeclareMathOperator{\HC}{HC}
\DeclareMathOperator{\range}{range}
\DeclareMathOperator{\GP}{GP}
\DeclarePairedDelimiter{\abs}{\lvert}{\rvert}
\begin{document}
\title{Minimizing the number of independent sets in triangle-free regular graphs}
\author{Jonathan Cutler} \address{Department of Mathematical Sciences\\
Montclair State University\\
Montclair, NJ} \email{jonathan.cutler@montclair.edu} 
\thanks{The first author was sponsored by the National Security Agency under Grant H98230-15-1-0016.  The United States Government is authorized to reproduce and distribute reprints notwithstanding any copyright notation herein.  The second author was sponsored by the Simons Foundation under Grant 429383.}
\author{A.~J.~Radcliffe} \address{Department of Mathematics\\
University of Nebraska-Lincoln\\
Lincoln, NE} \email{jamie.radcliffe@unl.edu}
\date{\today}
\begin{abstract}
	Recently, Davies, Jenssen, Perkins, and Roberts gave a very nice proof of the result (due, in various parts, to Kahn, Galvin-Tetali, and Zhao) that the independence polynomial of a $d$-regular graph is maximized by disjoint copies of $K_{d,d}$.  Their proof uses linear programming bounds on the distribution of a cleverly chosen random variable.  In this paper, we use this method to give lower bounds on the independence polynomial of regular graphs.  We also give new bounds on the number of independent sets in triangle-free regular graphs.
\end{abstract}
\maketitle

\section{Introduction} 
\label{sec:intro}

Extremal problems involving the number of substructures of a graph of a given type have popped up in quite a few different contexts of late.  One of the best known such results is due to Kahn \cite{K01} and Zhao \cite{Z}.  We let $\Ind(G)$ be the set of independent sets in $G$.  Their theorem bounds $\ind(G)=\abs{\Ind(G)}$ for regular graphs.

\begin{theorem}[Kahn, Zhao]\label{thm:kz}
	If $G$ is a $d$-regular graph on $n$ vertices, then 
	\[
		\ind(G)^{1/n}\leq \ind(K_{d,d})^{1/2d}.
	\]
\end{theorem}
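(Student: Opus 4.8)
Following Davies, Jenssen, Perkins, and Roberts, the plan is to deduce this from the stronger statement that for every $\l>0$ and every $d$-regular graph $G$,
\[
	\frac1n\log Z_G(\l)\le\frac1{2d}\log Z_{K_{d,d}}(\l),
\]
where $Z_G(\l):=\sum_{I\in\Ind(G)}\l^{\abs I}$ is the independence polynomial; putting $\l=1$ recovers the theorem since $Z_G(1)=\ind(G)$ (and $Z_{K_{d,d}}(\l)=2(1+\l)^d-1$). To get at this probabilistically, let $\mathbf I$ be the random independent set with $\P(\mathbf I=I)=\l^{\abs I}/Z_G(\l)$ and let $\a_G(\l):=\tfrac1n\E\abs{\mathbf I}$ be its \emph{occupancy fraction}. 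Since $\l\,\tfrac{d}{d\l}\log Z_G(\l)=\E\abs{\mathbf I}=n\,\a_G(\l)$ and $Z_G(0)=1$, we have $\tfrac1n\log Z_G(\l)=\int_0^\l\a_G(t)/t\,dt$, so it is enough to prove the pointwise bound $\a_G(\l)\le\a_{K_{d,d}}(\l)$ for all $\l>0$, where $\a_{K_{d,d}}(\l)=\l(1+\l)^{d-1}/(2(1+\l)^d-1)$ is the per-vertex occupancy fraction of a single copy of $K_{d,d}$.

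To bound $\a_G(\l)$, pick a uniformly random vertex $\mathbf v$ and expose $\mathbf I$ on the complement of the closed neighbourhood $N[\mathbf v]$. Let $F$ be the set of neighbours of $\mathbf v$ with no exposed vertex of $\mathbf I$ among their neighbours; conditioned on the exposure, the law of $\mathbf I$ on $N[\mathbf v]$ is the hard-core model on $G[\set{\mathbf v}\cup F]$, so the conditional probability that $\mathbf v\in\mathbf I$ is $\l/(\l+Z_{G[F]}(\l))$ and the conditional expected number of neighbours of $\mathbf v$ in $\mathbf I$ is $\l Z_{G[F]}'(\l)/(\l+Z_{G[F]}(\l))$. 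When $G$ is triangle-free (in particular for $K_{d,d}$) the set $F$ is independent, $Z_{G[F]}(\l)=(1+\l)^{\abs F}$, and these become functions of $\abs F$ alone:
\[
	p(y):=\frac{\l}{\l+(1+\l)^y},\qquad q(y):=\frac{y\,\l\,(1+\l)^{y-1}}{\l+(1+\l)^y}=\frac{\l\,y\,(1-p(y))}{1+\l}.
\]
Averaging over $\mathbf v$ and the exposure gives $\a_G(\l)=\E[p(\mathbf Y)]$, where $\mathbf Y:=\abs F\in\set{0,1,\dots,d}$, while summing $\abs{\mathbf I\cap N(v)}$ over all $v$ (a handshake count, each vertex of $\mathbf I$ contributing $d$) gives $\E\bigl[\abs{\mathbf I\cap N(\mathbf v)}\bigr]=d\,\a_G(\l)$, i.e.\ $\E[q(\mathbf Y)-d\,p(\mathbf Y)]=0$. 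For $d$-regular graphs with triangles one instead tracks a suitable vector of local statistics of $G[F]$ and the same two identities persist; controlling the contribution of triangles inside $N(\mathbf v)$ is precisely where this case is harder, and DJPR deal with it by revealing $N(\mathbf v)$ one vertex at a time in a random order.

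It remains to solve a finite-dimensional linear program: maximize $\E_\mu[p(\mathbf Y)]$ over probability measures $\mu$ on $\set{0,1,\dots,d}$ subject to $\E_\mu[q(\mathbf Y)-d\,p(\mathbf Y)]=0$. For $K_{d,d}$ the variable $\mathbf Y$ is supported on $\set{0,d}$ (a vertex sees either the whole opposite side free, or none of it), and a direct check shows the resulting two-point measure is feasible with value $\a_{K_{d,d}}(\l)$; so it suffices to show it is optimal. I would do this with the LP-duality certificate: choose $c$ so that $h(y):=(1+cd)\,p(y)-c\,q(y)$ satisfies $h(0)=\a_{K_{d,d}}(\l)$, which forces $c=(1-(1+\l)^d)/(d(2(1+\l)^d-1))$; one then checks $h(d)=\a_{K_{d,d}}(\l)$ as well (as complementary slackness predicts), and finally that $h(y)\le\a_{K_{d,d}}(\l)$ for all $y\in[0,d]$. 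Integrating this against $\mu$ and using the constraint yields $\a_G(\l)=\E_\mu[p(\mathbf Y)]\le\a_{K_{d,d}}(\l)$. The two real obstacles I anticipate are: for graphs with triangles, showing that enriching $\mathbf Y$ by the structure of $G[F]$ cannot beat the $K_{d,d}$ value; and verifying the transcendental inequality $h(y)\le\a_{K_{d,d}}(\l)$ uniformly in $\l$ — which holds with equality at $y=0$ and $y=d$ by construction, so it comes down to a convexity/sign analysis of $h$ (cleanest in the variable $t=(1+\l)^y$, in which $p$ is a fractional-linear function of $t$) ruling out an interior maximum above the $K_{d,d}$ level.
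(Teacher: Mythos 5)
First, a point of comparison: the paper states this theorem without proof (it is the Kahn/Galvin--Tetali/Zhao result that the paper takes as its starting point), so there is no in-paper argument to measure you against; the closest analogue is the Section~\ref{sec:regular} minimization machinery, whose dual-certificate structure your plan mirrors in the opposite direction. Your reduction to the occupancy-fraction inequality via $\frac1n\log P_G(\lambda)=\int_0^\lambda \a_G(t)/t\,dt$, the spatial-Markov conditioning on $I$ outside $N[v]$, the two identities $\a_G(\lambda)=\E[p(Y)]$ and $\E[q(Y)]=d\,\E[p(Y)]$, and the two-point feasible measure for $K_{d,d}$ are all correct, and the deferred dual-feasibility check $h(y)\le\a_{K_{d,d}}(\lambda)$ for $1\le y\le d-1$ does in fact hold (it is a finite verification of exactly the flavour of Lemma~\ref{lem:dualsat}). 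So the triangle-free case of the theorem is essentially complete in your sketch.

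The genuine gap is the case you wave at in one sentence: general $d$-regular $G$. When $N(v)$ spans edges, the conditional law of $I\cap N(v)$ is the hard-core model on the random graph $G[F]$, not on an empty graph, so the linear program must range over distributions on \emph{all} graphs $H$ on at most $d$ vertices, and the $K_{d,d}$ dual certificate must be verified against every such constraint --- concretely, one must show that the dual constraint indexed by an arbitrary $H$ is implied by the one indexed by $E_d$, which amounts to a comparison of the pair $\bigl(1/P_H(\lambda),\ \lambda P_H'(\lambda)/P_H(\lambda)\bigr)$ for arbitrary $H$ against the empty graph. Saying ``the same two identities persist'' is true but does not deliver this; the comparison of arbitrary $H$ with $E_d$ (via the sequential revealment of $N(v)$) is the heart of the Davies--Jenssen--Perkins--Roberts proof and is precisely what your proposal omits. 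Two honest ways to close the gap: (i) actually prove that comparison lemma, in the spirit of Lemma~\ref{lem:dualsat} but with the inequality $p'(H)/\mu(H)$ going the other way relative to $E_d$ rather than $K_d$; or (ii) since the theorem as stated only concerns $\lambda=1$, invoke Zhao's bipartite swapping inequality $\ind(G)^2\le\ind(G\times K_2)$: the bipartite double cover $G\times K_2$ is $d$-regular and triangle-free, so your triangle-free LP bound applied to it already yields $\ind(G)^{1/n}\le\ind(G\times K_2)^{1/2n}\le\ind(K_{d,d})^{1/2d}$. As written, your argument establishes the theorem only for triangle-free (in particular bipartite) regular graphs.
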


One source for questions of this type is the field of statistical mechanics.  For instance, the \emph{hard-core model} on a graph $G$ is a probability distribution on the independent sets of $G$ in which a independent set $I$ is chosen with probability proportional to $\l^{\abs{I}}$.  Here $\l>0$ is a parameter called the \emph{fugacity}.  The normalizing factor is
	\[
		P_G(\l)=\sum_{I\in \Ind(G)} \l^{\abs{I}},
	\]
known to graph theorists as the \emph{independence polynomial of $G$} and to statistical physicists as the \emph{partition function} of this hard-core model.

Kahn \cite{K01} in fact proved the analogue of Theorem~\ref{thm:kz} for the independence polynomial of bipartite graphs with fugacity $\l\geq 1$, i.e., 
\[
	P_G(\l)^{1/n}\leq P_{K_{d,d}}(\l)^{1/2d}.
\]  
Galvin and Tetali \cite{GT} extended Kahn's result to cover the case $0<\l<1$.  Finally, Zhao \cite{Z} proved the full theorem using a clever lifting argument.  

More recently, Davies, Jenssen, Perkins, and Roberts \cite{DJPR} gave an independent proof introducing an audacious new approach utilizing linear programming.  Following Davies et al., we will derive bounds on $P_G(\l)$ by considering the \emph{occupancy fraction}, denoted $\a_G(\l)$.  This is the expected fraction of vertices of $G$ belonging to a random independent set chosen according the hard-core model.  More explicitly, 
	\[
		\a_G(\l)=\frac{1}n \frac{\sum_{I\in \Ind(G)} \abs{I}\l^{\abs{I}}}{P_G(\l)}=\frac{1}n \frac{\l P'_G(\l)}{P_G(\l)}.
	\]
Davies et al. \cite{DJPR} proved the following.
\begin{theorem}[Davies, Jenssen, Perkins, Roberts]\label{thm:djpr}
	For all $d$-regular graphs $G$ and all $\l>0$, it is the case that
	\[
		\a_G(\l)\leq \a_{K_{d,d}}(\l).
	\]
\end{theorem}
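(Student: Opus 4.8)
I would prove Theorem~\ref{thm:djpr} by the linear-programming (``occupancy'') method indicated above. Fix a $d$-regular graph $G$ on $n$ vertices and a fugacity $\l>0$, let $I$ be an independent set drawn from the hard-core model on $G$ at fugacity $\l$, and independently let $\mathbf v$ be a uniformly random vertex of $G$. The plan is to read off from the pair $(\mathbf v,I)$ a bounded local random variable whose law obeys two linear identities, package these into a small linear program that bounds $\a_G(\l)$ from above, and then exhibit an explicit feasible dual solution of value $\a_{K_{d,d}}(\l)$.

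First I would fix the following local variable. Expose the restriction of $I$ to $V(G)\wo N[\mathbf v]$, and let $\mathbf F\of N(\mathbf v)$ consist of those neighbours $u$ of $\mathbf v$ with $I\cap N(u)\cap\bigl(V(G)\wo N[\mathbf v]\bigr)=\0$ (the ``available'' neighbours); put $\mathbf Y=\abs{\mathbf F}\in\set{0,1,\dots,d}$. Conditioned on what has been exposed, the unavailable neighbours of $\mathbf v$ certainly lie outside $I$, and the conditional law of $I$ on $N[\mathbf v]$ is exactly the hard-core model on $G[\set{\mathbf v}\cup\mathbf F]$; since $\mathbf v$ is joined to every vertex of $\mathbf F$, a short computation gives
\[
	\P\bigl[\mathbf v\in I \bigm| \text{exposed}\bigr]=\frac{\l}{\l+P_{G[\mathbf F]}(\l)},
	\qquad
	\E\bigl[\abs{I\cap N(\mathbf v)} \bigm| \text{exposed}\bigr]=\frac{\l\,P'_{G[\mathbf F]}(\l)}{\l+P_{G[\mathbf F]}(\l)}.
\]
When $G$ is triangle-free, $\mathbf F$ is an independent set, so $P_{G[\mathbf F]}(\l)=(1+\l)^{\mathbf Y}$ and both quantities are functions of $\mathbf Y$ alone: writing $\phi(j)=\l/(\l+(1+\l)^j)$ and $\psi(j)=\l j(1+\l)^{j-1}/(\l+(1+\l)^j)$, we get $\P[\mathbf v\in I\mid\text{exposed}]=\phi(\mathbf Y)$ and $\E[\abs{I\cap N(\mathbf v)}\mid\text{exposed}]=\psi(\mathbf Y)$.

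Next I would take expectations. On one hand $\a_G(\l)=\tfrac1n\sum_v\P[v\in I]=\E[\phi(\mathbf Y)]$; on the other, double counting together with $d$-regularity gives $\E[\abs{I\cap N(\mathbf v)}]=\tfrac1n\sum_v\sum_{u\adj v}\P[u\in I]=d\,\a_G(\l)$, hence $\E[\psi(\mathbf Y)]=d\,\a_G(\l)$. So, with $p_j=\P[\mathbf Y=j]$, the vector $(p_0,\dots,p_d)$ is a probability distribution with $\sum_j p_j\bigl(\psi(j)-d\,\phi(j)\bigr)=0$ and $\a_G(\l)=\sum_j p_j\phi(j)$. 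Therefore $\a_G(\l)$ is at most the value of the linear program: maximize $\sum_j q_j\phi(j)$ over $q\in\R_{\ge0}^{d+1}$ subject to $\sum_j q_j=1$ and $\sum_j q_j\bigl(\psi(j)-d\,\phi(j)\bigr)=0$. With only two equality constraints, the optimum is attained at a two-point distribution, and since in $K_{d,d}$ the variable $\mathbf Y$ is supported on $\set{0,d}$ (every neighbour of $\mathbf v$ is available exactly when its side of the bipartition is empty off $\mathbf v$), $\set{0,d}$ is the support to aim for.

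Finally I would dualize. It suffices to find constants $\beta,\gamma$ with $(1+\gamma d)\phi(j)-\gamma\psi(j)\le\beta$ for every integer $j\in\set{0,1,\dots,d}$: then any feasible $q$ satisfies $\sum_j q_j\phi(j)=\sum_j q_j\bigl[(1+\gamma d)\phi(j)-\gamma\psi(j)\bigr]+\gamma\sum_j q_j\bigl(\psi(j)-d\,\phi(j)\bigr)\le\beta$. Forcing equality at $j=0$ and $j=d$ determines $\beta$ and $\gamma$; solving the resulting $2\times2$ system yields $\beta=\l(1+\l)^{d-1}/\bigl(2(1+\l)^d-1\bigr)$, which is exactly $\a_{K_{d,d}}(\l)$ since $P_{K_{d,d}}(\l)=2(1+\l)^d-1$. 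I expect the main obstacle to be the (elementary but not automatic) verification that, with these values of $\beta,\gamma$, the inequality $(1+\gamma d)\phi(j)-\gamma\psi(j)\le\beta$ also holds at the intermediate integers $j=1,\dots,d-1$; after clearing denominators this is a one-variable inequality that I would attack by exhibiting the appropriate concavity (in $j$, or in the substitution $x=(1+\l)^j$). For graphs that are not triangle-free, one more ingredient is needed: one must check that an edge inside $\mathbf F$ cannot push the local contribution above its edgeless value---concretely, that $P_{G[\mathbf F]}(\l)\le(1+\l)^{\mathbf Y}$ together with the companion bound on $P'_{G[\mathbf F]}(\l)$ forces $\l(1+\gamma d)-\gamma\l\,P'_{G[\mathbf F]}(\l)\le\beta\bigl(\l+P_{G[\mathbf F]}(\l)\bigr)$---which I would isolate as a short monotonicity lemma. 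That verification, in both cases, is where the real work lies; everything else is bookkeeping with the hard-core measure.
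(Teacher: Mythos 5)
The paper does not actually prove Theorem~\ref{thm:djpr}: it is quoted from Davies, Jenssen, Perkins, and Roberts \cite{DJPR}, and Section~\ref{sec:regular} develops the same occupancy/LP machinery only for the reverse (minimization) problem. Your outline faithfully reconstructs the DJPR strategy: the two expressions for $\a_G(\l)$ (your $\E[\phi]$ and $\E[\psi]=d\,\a_G(\l)$ are the analogue of Lemma~\ref{lem:56}, in the variant where one conditions on $I$ outside the closed neighbourhood), the relaxation to a linear program in the law of the local configuration, the observation that in $K_{d,d}$ the variable $Y$ is supported on $\set{0,d}$, and the dual certificate with value $\beta=\l(1+\l)^{d-1}/(2(1+\l)^d-1)=\a_{K_{d,d}}(\l)$. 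All of that is correct, including the value of $\beta$ and the sign conventions.

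The gap is exactly where you say the real work lies, and your proposed route through it does not go as described. A small point first: to deduce $(1+\gamma d)\phi(j)-\gamma\psi(j)\le\beta$ for $1\le j\le d-1$ from equality at $j=0$ and $j=d$ you need this function to be \emph{convex} on $[0,d]$ (so that it lies below the horizontal chord), not concave. More seriously, when $G$ has triangles the local configuration is a graph $H=G[\mathbf F]$ on at most $d$ vertices, not just a cardinality, so the dual constraint $\l(1+\gamma d)-\gamma\l P'_{H}(\l)\le\beta\bigl(\l+P_{H}(\l)\bigr)$ must be verified for every such $H$. Here $\gamma<0$ and $\beta>0$, so the left side increases with $P'_H$ while the right side increases with $P_H$; adding edges to $H$ decreases \emph{both} $P_H$ and $P'_H$, so the two sides move in the same direction and there is no ``edges can only help'' monotonicity reducing the general constraint to the edgeless one. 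In particular the bound $P_H(\l)\le(1+\l)^{\abs{V(H)}}$ points the wrong way for the right-hand side, and independent upper/lower bounds on $P_H$ and $P'_H$ separately cannot close the argument. What is actually needed is a coupling of the two quantities through the hard-core model on $H$ itself, e.g.\ the identity $\l P'_H(\l)=\tfrac{\l}{1+\l}\sum_{u\in V(H)}P_{H-N[u]}(\l)$; this is the technical heart of the DJPR proof, and as written your certificate is unverified precisely at the step that makes the theorem true.
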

Because $\a_G(\l)$ is essentially the logarithmic derivative of $P_G(\l)$, this is a strengthening of Theorem~\ref{thm:kz}.  Corollary~\ref{cor:main} below shows how to use the occupancy fraction to bound the independence polynomial.

In this paper, we investigate lower bound analogues of Theorem~\ref{thm:djpr}.  In Section~\ref{sec:regular}, we prove that for any $d$-regular graph, the occupancy fraction is bounded below by that of $K_{d+1}$.  This is a stronger form of the corresponding result for $\ind(G)$ proved by the authors in \cite{CR}.  In later sections, we discuss a problem raised by Kahn \cite{K15}, that of giving a lower bound on $P_G(\l)$ for $d$-regular triangle-free graphs.  We use the same occupancy fraction approach to give bounds in this case.

When $\l$ is large, the hard-core model is biased strongly towards large independent sets.  Indeed, 
\begin{equation}
	\lim_{\l\to\infty} \a_G(\l)=\frac{\a(G)}{n},\label{eqn:ir}
\end{equation} 
where $\a(G)$ is the \emph{independence number of $G$}, and the ratio $\a(G)/n$ is the \emph{independence ratio of $G$}.  Our methods yield a weak lower bound on the independence ratio of regular triangle-free graphs, which we present in Section~\ref{sec:dreg}.  In Section~\ref{sec:pete}, we focus on triangle-free cubic graphs.  Here we are able to give a bound that is relatively good when $\l=1$.  We conjecture that the Petersen graph is extremal, but this cannot be true for all $\l$ since the independence ratio for triangle-free cubic graphs is minimized by $\GP(7,2)$, a generalized Petersen graph (see Staton \cite{S}).


\section{Lower bounds for the hard-core model on regular graphs} 
\label{sec:regular}
	
The following lower bound on the occupancy fraction is the main theorem of this section.  Its proof appears at the end of this section after a number of lemmas concerning the linear programming approach.
	
\begin{theorem}\label{thm:main}
	If $G$ is a $d$-regular graph on $n$ vertices and $\l>0$, then 
	\[
		\a_G(\l)\geq \a_{K_{d+1}}(\l).
	\]
	As a consequence, we have
	\[
		P_G(\l)^{1/n}\geq P_{K_{d+1}}(\l)^{1/(d+1)}.
	\]
	Equality is, in both cases, only achieved for $G$ a disjoint union $K_{d+1}$s.
\end{theorem}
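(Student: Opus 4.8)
The plan is to follow the Davies–Jenssen–Perkins–Roberts linear programming framework, but to extract a lower bound rather than an upper bound on the occupancy fraction. The starting point is to pick a uniformly random vertex $v$ of $G$, sample a random independent set $I$ from the hard-core model on $G - N[v]$ (equivalently, condition on $v$ and its neighborhood being ``available''), and record the random variable counting how many neighbors of $v$ lie in $I$. Call this count $X$, so $X$ takes values in $\{0, 1, \dots, d\}$. The key identity, which one derives exactly as in \cite{DJPR}, expresses $\a_G(\l)$ in terms of the distribution of $X$: roughly, $\a_G(\l) = \E\!\left[\frac{\l}{1+\l}\cdot \1[X=0]\cdot(\text{correction})\right]$ suitably normalized, and there is a companion identity (a ``conservation law'' coming from summing an edge-based quantity two ways) relating $\E[X]$ back to $\a_G(\l)$ itself. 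These two linear constraints on the distribution of $X$, together with the obvious constraints that the distribution is a probability distribution and that certain ratios are forced by the structure of the hard-core model, define a linear program whose optimum lower-bounds $\a_G(\l)$.

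The main work, handled by the sequence of lemmas the author promises before the proof, is to (i) set up this LP precisely — identifying the right random variable and the exact form of the linear constraints, being careful that for a \emph{lower} bound one wants the constraints to \emph{force} the distribution of $X$ to be spread out rather than concentrated — and (ii) solve the LP, showing that the minimum is attained at the distribution realized by $K_{d+1}$. For $K_{d+1}$, the random variable $X$ is essentially deterministic: once $v$ is chosen and $N[v]$ removed, nothing is left, so $X \equiv 0$ and the occupancy fraction is exactly $\a_{K_{d+1}}(\l) = \frac{\l}{1 + (d+1)\l}$. So the LP claim is that, subject to the conservation constraint, shifting any mass of the distribution of $X$ away from $X = 0$ can only \emph{increase} $\a_G(\l)$ — intuitively, neighbors of $v$ being occupied makes $v$ itself \emph{more} likely to be free and hence, on balance across all vertices, raises the occupancy. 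Establishing the correct direction of this monotonicity, and checking that the extremal vertex of the LP polytope is indeed the $K_{d+1}$ point, is where I expect the real obstacle to lie; by contrast, the passage from the occupancy bound to $P_G(\l)^{1/n} \geq P_{K_{d+1}}(\l)^{1/(d+1)}$ is routine and follows from Corollary~\ref{cor:main}, by integrating the inequality $\a_G(t)/t \geq \a_{K_{d+1}}(t)/t$ in $t$ from $0$ to $\l$ (using $\a_G(t) = \frac{t}{n}\frac{d}{dt}\log P_G(t)$ and $P_G(0) = 1$).

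For the equality characterization, the argument is that equality in the occupancy bound forces equality in the solved LP, which forces the distribution of $X$ (for every vertex $v$, after the conditioning) to be the extremal one, i.e.\ $X \equiv 0$ almost surely. This says that for every vertex $v$ and every independent set occurring with positive probability in $G - N[v]$, no neighbor of $v$ is used — but running this over all choices of the ``far'' part of the configuration, it forces $N[v]$ to exhaust a connected component, i.e.\ every component of $G$ is a clique, and $d$-regularity then pins each component down to $K_{d+1}$. For the polynomial inequality, equality propagates back: equality in $P_G(\l)^{1/n} \geq P_{K_{d+1}}(\l)^{1/(d+1)}$ at a single $\l > 0$ together with the integral representation forces $\a_G(t) = \a_{K_{d+1}}(t)$ for all $t \in (0,\l)$, reducing to the occupancy case. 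The one point requiring care here is that the occupancy inequality must be shown \emph{strict} unless $G$ is a disjoint union of $K_{d+1}$'s, which again comes down to the LP having a unique optimal vertex and that vertex being realizable only by the clique configuration.
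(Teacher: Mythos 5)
Your overall strategy---bound $\a_G(\l)$ below by the optimum of a linear program over distributions of a local random variable subject to a ``count two ways'' constraint, show the optimum is attained by the $K_{d+1}$ configuration, and integrate the logarithmic derivative---is indeed the paper's strategy, and your integration step and the outline of the equality analysis are sound. But there are two genuine gaps. First, your state space is wrong for general $d$-regular graphs: you track only a count $X\in\{0,\dots,d\}$. Because $G$ may contain triangles, $N(v)$ can induce edges, and the two quantities entering the constraints---the probability that $v$ is uncovered and the expected number of occupied neighbors of $v$---depend on the isomorphism type of the graph induced on the set $U$ of neighbors of $v$ not covered from outside $N(v)$, not merely on a count. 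Conditioned on $I\setminus N(v)$, the set $I\cap N(v)$ is a hard-core sample from $H=G[U]$, the correct identities are $\a_G(\l)=\frac{\l}{1+\l}\E\bigl[1/P_H(\l)\bigr]$ and $\a_G(\l)=\frac{\l}{d}\E\bigl[P'_H(\l)/P_H(\l)\bigr]$, and the LP variables must be indexed by all graphs on at most $d$ vertices. (A count suffices only in the triangle-free case, which is why the paper treats that case separately.) Relatedly, your extremal point is mischaracterized: for $G=K_{d+1}$ the random graph $H$ is not deterministic but is supported on $\{\0,K_d\}$, so the heuristic ``shifting mass away from $X=0$ can only increase $\a$'' is aimed at the wrong target.

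Second, you explicitly defer the crux---solving the LP and establishing the monotonicity in the right direction---and that is where essentially all of the content lies. The paper does this by exhibiting an explicit dual feasible pair $(A,B)$ whose objective matches the primal value at the $K_{d+1}$ distribution, and reducing dual feasibility to the inequality $p'(H)/\mu(H)\le 1$ for every graph $H$ on at most $d$ vertices, where $p'(H)=\P(I\ne\0)$ and $\mu(H)=\E\abs{I}$ under the hard-core model on $H$; this holds because $\abs{I}\ge \1(I\ne\0)$ pointwise, with equality of expectations exactly when $H$ is complete. Complementary slackness then forces any extremal distribution to be supported on complete graphs (and $\0$), and a short argument shows that a $d$-regular $G$ all of whose realizations of $H$ are complete or empty must be a disjoint union of $K_{d+1}$s. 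Without this computation, or an equivalent one, the proposal is a plan rather than a proof.
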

	
	Following Davies et al., we will consider, for each vertex in $V$, the probability that it belongs to, and the probability that it is covered by, a randomly chosen independent set.  To be explicit, if $I$ is an independent set, we say that $v\in V(G)$ is \emph{occupied} if $v\in I$ and \emph{uncovered} if $I\cap N(v)=\emptyset$.  If $I$ is distributed according to the hard-core model, we write $p_v$ for $\P(v\in I)$ and $q_v$ for $\P(\text{$v$ is uncovered})$.  Note that both $p_v$ and $q_v$ are functions of $\l$.  Also, it is the case that $p_v\leq q_v$ since $\setof{I\in \Ind(G)}{\text{$v$ is occupied}}\of \setof{I\in \Ind(G)}{\text{$v$ is uncovered}}$.
	
\begin{lemma}\label{lem:23}
	In the hard-core model on $G$ with fugacity $\l>0$, we have 
	\begin{enumerate}[1)]
		\item $p_v=\frac{\l}{1+\l}q_v$, and
		\item $\a_G(\l)=\frac{1}n\sum_{v\in V(G)} p_v$.
	\end{enumerate}
\end{lemma}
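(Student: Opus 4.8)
The plan is to treat the two parts separately. Part 2) follows immediately from linearity of expectation, while part 1) follows from a weight-preserving bijection between two families of independent sets (equivalently, a one-line conditional-probability computation). Neither part presents a genuine obstacle; the only thing to be careful about is keeping the partition function $P_G(\l)$ straight in the bookkeeping, since it is the common normalizing constant for all the probabilities involved.

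For 2), I would write $\abs{I}=\sum_{v\in V(G)}\1[v\in I]$ for each $I\in\Ind(G)$, take expectations over the hard-core model, and use linearity to obtain $\E\bigl[\abs{I}\bigr]=\sum_{v\in V(G)}\P(v\in I)=\sum_{v\in V(G)}p_v$. Since $\E\bigl[\abs{I}\bigr]=\bigl(\sum_{I\in\Ind(G)}\abs{I}\l^{\abs{I}}\bigr)/P_G(\l)$ by definition of the model, dividing by $n$ recovers exactly the formula for $\a_G(\l)$ recorded in Section~\ref{sec:intro}. This step is routine.

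For 1), the key observation is that being occupied forces being uncovered: if $v\in I$ and $I$ is independent then $I\cap N(v)=\0$. Hence the uncovered independent sets split into those containing $v$ and those avoiding $v$, and I would set up the map $I\mapsto I\cup\set{v}$ from $\setof{I\in\Ind(G)}{\text{$v$ uncovered and }v\notin I}$ to $\setof{I\in\Ind(G)}{v\in I}$, with inverse $I\mapsto I\wo\set{v}$. This map is well defined precisely because "uncovered" means $N(v)\cap I=\0$, so $I\cup\set{v}$ is still independent, and it multiplies the hard-core weight $\l^{\abs{I}}$ by $\l$. Summing weights over the bijection gives $\sum_{v\in I}\l^{\abs{I}}=\l\sum_{v\text{ uncovered},\,v\notin I}\l^{\abs{I}}$; dividing through by $P_G(\l)$ turns this into $p_v=\l\,(q_v-p_v)$, which rearranges to $p_v=\tfrac{\l}{1+\l}q_v$. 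Equivalently, one can argue directly that conditioned on $v$ being uncovered the model assigns $v\in I$ weight proportional to $\l$ and $v\notin I$ weight proportional to $1$, so $\P(v\in I\mid v\text{ uncovered})=\l/(1+\l)$, and multiplying by $q_v=\P(v\text{ uncovered})$ (using occupied $\Rightarrow$ uncovered) yields $p_v$. Either route gives the claim with essentially no further computation.
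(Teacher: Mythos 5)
Your proposal is correct and matches the paper's proof: part 2) is the same indicator-sum/linearity argument, and your second route for part 1) (conditioning on $v$ being uncovered to get $\P(v\in I\mid v\text{ uncovered})=\l/(1+\l)$) is exactly the paper's one-line argument, with your weight-preserving bijection just an explicit unpacking of it.
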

	
\begin{proof}
	For 1), note that the conditional probability that $v$ is occupied given that it is uncovered is $\l/(1+\l)$.  For the second part, simply write $\abs{I}=\sum_{v\in V(G)} \1(v\in I)$ and take expectations.  
\end{proof}
	
We will prove Theorem~\ref{thm:main} by computing the occupancy fraction in two different ways, each based on the neighborhood of a uniformly randomly chosen vertex $v$ in $G$.  In particular, we record the external influence of $I$ on $N(v)$.  

\begin{definition}
	Let $I\in \Ind(G)$ be chosen according to the hard-core model, and $v$ be chosen uniformly from $V(G)$, independently of $I$.  We define random variables 
	\[
		U=U(v,I)=N(v)\wo N(I\wo N(v)),\qquad\text{and}\qquad H=H(v,I)=G[U].
	\]
	Thus, $U$ is the subset of the neighborhood of $v$ which is not covered by any vertex of $I$ outside of $N(v)$.  
\end{definition}

We define this triple because, conditioning on $I\wo N(v)$, we have that $I\cap N(v)\of U$ and, moreover, $I\cap N(v)$ is distributed according to the hard-core model on $H$ with fugacity $\l$.

\begin{lemma}[Davies et al.~\cite{DJPR}]\label{lem:56}
	In the hard-core model on a $d$-regular graph $G$ with fugacity $\l>0$ we have, with the notation above,
	\begin{enumerate}[1)]
		\item $\displaystyle \a_G(\l)=\frac{\l}{1+\l}\E\left(\frac{1}{P_H(\l)}\right)$, and also
		\item $\displaystyle \a_G(\l)=\frac{\l}{d}\,\E\left(\frac{P'_H(\l)}{P_H(\l)}\right)$.
	\end{enumerate}
\end{lemma}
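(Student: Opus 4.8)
The plan is to establish both formulas by conditioning on $I \wo N(v)$, exploiting the key observation already recorded after the definition: given $I \wo N(v)$, the set $I \cap N(v)$ is distributed according to the hard-core model on $H = H(v,I)$ with fugacity $\l$. First I would set up the conditioning carefully. Fix the randomness of $v$ and of $I \wo N(v)$; this determines $U$ and hence $H$. Conditioned on this, the partition function governing the choice of $I \cap N(v)$ is exactly $P_H(\l) = \sum_{J \in \Ind(H)} \l^{\abs J}$, and the conditional probability that $I \cap N(v) = J$ is $\l^{\abs J}/P_H(\l)$ for each independent set $J$ of $H$. Two elementary facts about the hard-core model on a fixed graph $H$ will do all the work: the probability that $I \cap N(v) = \0$ (equivalently, that $v$ is \emph{uncovered} in the larger model, given this conditioning) is $1/P_H(\l)$; and the expected size $\E(\abs{I \cap N(v)} \mid I \wo N(v))$ equals $\l P_H'(\l)/P_H(\l)$, since $\sum_J \abs J \l^{\abs J} = \l P_H'(\l)$.

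For part 1), I would combine these with Lemma~\ref{lem:23}. By Lemma~\ref{lem:23}(2), $\a_G(\l) = \frac1n \sum_{v} p_v = \E(p_v)$ where the expectation is over the uniform choice of $v$ — more precisely, $\a_G(\l) = \P(v \in I)$ for $v$ uniform and $I$ from the hard-core model. By Lemma~\ref{lem:23}(1), $\P(v \in I) = \frac{\l}{1+\l}\P(v \text{ uncovered})$. Now $\P(v \text{ uncovered}) = \P(I \cap N(v) = \0)$, and taking the tower property over $I \wo N(v)$ and $v$, this equals $\E\!\left(\P(I \cap N(v) = \0 \mid v, I\wo N(v))\right) = \E(1/P_H(\l))$ by the first elementary fact above. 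This gives formula 1).

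For part 2), I would use the other expression for the occupancy fraction: $\a_G(\l) = \frac1n \E(\abs I) $ with $\abs I$ counting occupied vertices, but here I want to count via neighborhoods. The clean route is a double-counting / handshake identity: since $G$ is $d$-regular, $\sum_{v \in V(G)} \abs{I \cap N(v)} = d \abs I$ for every independent set $I$ (each vertex of $I$ lies in exactly $d$ neighborhoods). Taking expectations over $I$ and dividing by $n$, $\a_G(\l) = \frac1n \E(\abs I) = \frac{1}{dn}\sum_v \E(\abs{I \cap N(v)}) = \frac1d \E_v \E(\abs{I \cap N(v)})$ with $v$ uniform. Conditioning the inner expectation on $I \wo N(v)$ and applying the second elementary fact gives $\E(\abs{I \cap N(v)} \mid v, I \wo N(v)) = \l P_H'(\l)/P_H(\l)$, so $\a_G(\l) = \frac{\l}{d}\E(P_H'(\l)/P_H(\l))$, which is formula 2).

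The main obstacle — and really the only subtle point — is justifying rigorously that, conditioned on $I \wo N(v)$, the restriction $I \cap N(v)$ follows the hard-core model on $H$ rather than on all of $N(v)$ or on some other graph. One must check that the independent sets $I$ of $G$ that agree with a fixed value of $I \wo N(v)$ are in bijection with pairs (that fixed value, an independent set $J$ of $H$): the constraint is that $J \of N(v)$ must avoid $N(I \wo N(v))$, which is exactly the definition of $U = N(v) \wo N(I \wo N(v))$, and $J$ must be independent in $G[U] = H$; conversely any such $J$ yields a valid independent set of $G$ since no new edges are created. The weight $\l^{\abs I} = \l^{\abs{I \wo N(v)}}\l^{\abs J}$ factors accordingly, so after normalizing by the conditional partition function the weight on $J$ is precisely $\l^{\abs J}/P_H(\l)$. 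Once this bijection-with-weights is spelled out, both formulas follow from the two one-line computations on a fixed graph. Since this lemma is attributed to Davies et al., I would present the argument compactly, treating the conditioning step as the heart of the matter and the rest as bookkeeping.
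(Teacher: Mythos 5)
Your proof is correct and follows essentially the same route as the paper: part 1) via Lemma~\ref{lem:23} and the conditional hard-core distribution of $I\cap N(v)$ on $H$, and part 2) via the fact that averaging over the $d$ neighbors of a uniform vertex of a regular graph again gives a uniform vertex (your handshake identity $\sum_v \abs{I\cap N(v)}=d\abs{I}$ is exactly the paper's ``uniform random neighbor'' trick in different clothing). Your explicit justification of the conditioning step is a welcome addition, since the paper simply asserts it after the definition of $H$.
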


\begin{proof}
	Since $v$ is uniformly distributed on $V(G)$, Lemma~\ref{lem:23} yields
	\[
		\a_G(\l)=\E(p_v)=\frac{\l}{1+\l}\E(q_v)=\frac{\l}{1+\l}\E\left(\frac{1}{P_H(\l)}\right).
	\]
The final equality follows since $v$ is uncovered precisely if $I\cap N(v)=\emptyset$, and $I\cap N(v)$ is distributed according to the hard-core model on $H$.

	For the second part, we pick a random vertex of $V(G)$ by picking a uniformly random neighbor, say $u$, of $v$.  Since $G$ is regular, $u$ is also uniformly distributed on $V(G)$.  Thus,
	\[
		\a_G(\l)=\E\left(p_u\right)=\frac{1}{d}\,\E\left(\frac{\l P'_H(\l)}{P_H(\l)}\right),
	\]
	since $\E(\l P'_H(\l)/P_H(\l))$ is the expected number of occupied neighbors of $v$.
\end{proof}

We will now find the minimum value of $\E(1/P_H(\l))$, where the distribution of $H$ is no longer tied to that arising from some $d$-regular graph.  Instead, the distribution of $H$ will merely have to satisfy the very limited condition that the two expressions for $\a$ in Lemma~\ref{lem:56} agree.  We let $\H$ be the set of all graphs on at most $d$ vertices, including the graph with empty vertex set.  We say a random variable $H$ with values in $\H$ is \emph{neighborly} if  
\[
	\frac{\l}{1+\l}\E\left(\frac{1}{P_H(\l)}\right)=\frac{\l}{d}\,\E\left(\frac{P'_H(\l)}{P_H(\l)}\right).
\]
Now we define 
\[
	\a_*=\frac{\l}{1+\l}\inf\setof{\E\left(\frac{1}{P_H(\l)}\right)}{\text{$H$ is a neighborly probability distribution on $\H$}}.
\]
  This minimum is the optimal value of a linear program where the variables are the probabilities that the distribution of $H$ assigns to graphs in $\H$.  We write $p_H$ for these probabilities, and also set
\[
	a_H=\frac{1}{P_H(\l)}\qquad\text{and}\qquad b_H=\frac{(1+\l)P'_H(\l)}{dP_H(\l)}.
\]  
In standard form, the linear program is the following, which we refer to as $\LP(d,\l)$.
\begin{align*}
	\a_*=\min \frac{\l}{2(1+\l)} & \sum_{H\in \H} p_H(a_H+b_H)\quad\text{subject to}\\
	& \sum_{H\in \H} p_H=1,\\
	& \sum_{H\in \H} p_H(a_H-b_H)=0,\\
	& p_H\geq 0\quad\text{for all $H\in \H$}.
\end{align*}
	
	A number of times in this paper we use some basic facts about the solutions to linear programs and their duals.  Firstly, if we can find feasible solutions to a program and its dual with matching objective values, both solutions are optimal.  Secondly, there is a more subtle equivalent criterion for simultaneous optimality called \emph{complementary slackness}.  A pair of feasible solutions (one for the primal and one for the dual) satisfy complementary slackness if, for every matching pair of variable and constraint, either the constraint is tight or the variable is zero (or both).  See the text of Chv\'atal \cite{C} for more details.
	
	We will compute the solution to $\LP(d,\l)$ by exhibiting a solution to the primal and a solution to the dual which have matching objective values.  We will also verify uniqueness by using complementary slackness.  We start by writing down the dual problem.  It has two unbounded variables, $A$ and $B$, corresponding to the equality constraints in the original program and an inequality corresponding to each variable.
\begin{align*}
	\a_*&=\max \frac{\l}{2(1+\l)} A\quad\text{subject to}\\
	& A+B(a_H-b_H)\leq a_H+b_H\quad\text{for all $H\in \H$}.
\end{align*}
We will exhibit a dual feasible $(A,B)$ whose dual objective value agrees with the primal objective value for the distribution $(p_H)$ arising from taking $G=K_{d+1}$.  To that end, we prove a lemma that describes the $(A,B)$ that satisfy certain of the dual constraints with equality.

\begin{lemma}
	Suppose that $A=A_K,B=B_K\in \R$ are such that the dual constraints $A+B(a_H-b_H) \ge a_H+b_H$ are satisfied with equality for $H$ being the graph with no vertices (denoted $\0$) and also for $H=K\neq \0$. Then 
	\[
		A_K= \frac{2b_K}{1 - a_K + b_K} = \frac{2}{1 + \frac{\l d}{2(1+\l)}\frac{p'(K)}{\mu(K)}}\qquad\text{and}\qquad B_K=1-A_K,
	\] 
	where $p'(K)=\P(I\neq\0)$ and $\mu(K)=\E\abs{I}$ for $I$ distributed according to $\HC_K(\l)$. 
\end{lemma}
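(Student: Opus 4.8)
The plan is to read the two equality hypotheses as a linear system in the unknowns $A$ and $B$, solve it, and then rewrite the answer in terms of the hard-core model on $K$. First I would evaluate the relevant quantities at the empty graph: since $\Ind(\0)=\set{\0}$ we have $P_\0(\l)=1$ and $P'_\0(\l)=0$, so $a_\0=1$ and $b_\0=0$. Plugging $H=\0$ into the constraint $A+B(a_H-b_H)=a_H+b_H$ therefore collapses to $A+B=1$, that is, $B_K=1-A_K$; this settles the second assertion.

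Next I would substitute $B=1-A$ into the equality for $H=K$, namely $A+(1-A)(a_K-b_K)=a_K+b_K$, and collect the coefficient of $A$ to obtain $A\,(1-a_K+b_K)=2b_K$. Since $K\neq\0$ we have $P_K(\l)>1$, hence $a_K<1$, so the factor $1-a_K+b_K$ is strictly positive and we may divide to get $A_K=\dfrac{2b_K}{1-a_K+b_K}$, the first displayed formula. To pass to the stated closed form I would use the two standard identities for $\HC_K(\l)$: first, $\P(I=\0)=\l^0/P_K(\l)=a_K$, so $1-a_K=\P(I\neq\0)=p'(K)$; second, $\mu(K)=\E\abs{I}=\l P'_K(\l)/P_K(\l)$, so $b_K=\dfrac{(1+\l)P'_K(\l)}{dP_K(\l)}=\dfrac{(1+\l)\mu(K)}{d\l}$. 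Dividing the numerator and denominator of $2b_K/(1-a_K+b_K)$ by $b_K$ and substituting these two expressions then yields the closed form for $A_K$.

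There is no genuinely difficult step here: the lemma is essentially bookkeeping, amounting to a single $2\times2$ linear solve followed by one substitution. The only places that call for a little care are evaluating the empty-graph quantities correctly (it is tempting but wrong to take $P_\0(\l)=0$), checking that $1-a_K+b_K\neq 0$ for $K\neq\0$ so the division is legitimate, and keeping the constant in front of $p'(K)/\mu(K)$ straight when converting between $b_K$ and $\mu(K)$ through $\mu(K)=\l P'_K(\l)/P_K(\l)$.
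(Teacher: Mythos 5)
Your proof is correct and essentially identical to the paper's: derive $A+B=1$ from $a_\0=1$, $b_\0=0$, substitute into the equality for $H=K$ to get $A_K(1-a_K+b_K)=2b_K$, and rewrite using $1-a_K=p'(K)$ and $b_K=(1+\l)\mu(K)/(d\l)$. One remark: carrying out your final substitution literally yields $A_K = 2\big/\bigl(1+\tfrac{\l d}{1+\l}\tfrac{p'(K)}{\mu(K)}\bigr)$ rather than the stated $2\big/\bigl(1+\tfrac{\l d}{2(1+\l)}\tfrac{p'(K)}{\mu(K)}\bigr)$; the extra factor $2$ in the lemma's denominator is a typo in the paper (test $K=K_d$, where $p'(K_d)/\mu(K_d)=1$ and $A_{K_d}=2(1+\l)/(1+(d+1)\l)$), and it is harmless downstream because only the strict monotonicity of $A_K$ in $p'(K)/\mu(K)$ is used.
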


\begin{proof}
	Since $a_\0=1$ and $b_\0=0$, one of the equations that $A$ and $B$ satisfy is that $A+B=1$. Substituting into the constraint corresponding to $H=K$ gives 
	\[
		A_K = \frac{2b_K}{1 - a_K + b_K} = \frac{2b_K}{p'(K) + b_K} = \frac2{1 + \frac{p'(K)}{b_K}} = \frac{2}{1 + \frac{\l d}{2(1+\l)}\frac{p'(K)}{\mu(K)}}.\qedhere
	\]
\end{proof}

\begin{lemma}\label{lem:dualsat}
	With the notation of the previous lemma, if $H$ and $K$ are graphs on at least one and at most $d$ vertices, then the following are equivalent.
	\begin{enumerate}
		\item $A_K + B_K(a_H-b_H) \le a_H + b_H.$
		\item $A_K \le A_H.$
		\item $\displaystyle \frac{p'(K)}{\mu(K)} \ge \frac{p'(H)}{\mu(H)}.$
	\end{enumerate}
\end{lemma}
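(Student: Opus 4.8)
The plan is to prove a cycle of implications, exploiting the explicit formula from the previous lemma: $A_K = \frac{2}{1 + c\,p'(K)/\mu(K)}$ where $c = \frac{\l d}{2(1+\l)}$ is a positive constant independent of $K$. Since $t \mapsto \frac{2}{1+ct}$ is strictly decreasing in $t > 0$, the equivalence of (2) and (3) is immediate: $A_K \le A_H$ exactly when $c\,p'(K)/\mu(K) \ge c\,p'(H)/\mu(H)$, i.e. when $p'(K)/\mu(K) \ge p'(H)/\mu(H)$. So the real content is the equivalence of (1) with (2), and for that I would show (1)$\iff$(2) directly by rearranging the dual constraint.

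For (1)$\iff$(2): using $B_K = 1 - A_K$, the inequality in (1) becomes $A_K + (1-A_K)(a_H - b_H) \le a_H + b_H$, which rearranges to $A_K\bigl(1 - (a_H - b_H)\bigr) \le 2b_H$, i.e. $A_K(1 - a_H + b_H) \le 2 b_H$. Now $1 - a_H + b_H = p'(H) + b_H > 0$ for $H$ on at least one vertex (since $p'(H) = \P(I \ne \0) > 0$ and $b_H \ge 0$), so this is equivalent to $A_K \le \frac{2b_H}{1 - a_H + b_H} = A_H$, where the last equality is exactly the formula for $A_H$ from the previous lemma. This gives (1)$\iff$(2). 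Chaining with the monotonicity argument above closes the cycle.

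The one point that needs genuine care — and the only real obstacle — is checking the sign conditions that make the rearrangements reversible, namely that $1 - a_H + b_H > 0$ and that the denominators $\mu(H), \mu(K)$ are strictly positive. Both hold precisely because $H$ and $K$ are required to have at least one vertex: then $P_H(\l) < 1 + \l\cdot(\text{something}) $ need not be bounded, but more simply $\mu(H) = \E\abs{I} > 0$ since the all-empty set is not the only independent set contributing — in fact any single vertex is independent, so $\mu(H) \ge \l/P_H(\l) > 0$ — and likewise $p'(H) > 0$. I would state these observations as a one-line remark before the main computation. With the positivity secured, everything else is elementary algebra, so I would present the proof compactly as the two rearrangements plus the monotonicity of $t \mapsto 2/(1+ct)$.
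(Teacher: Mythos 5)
Your proof is correct and follows essentially the same route as the paper's: substitute $B_K = 1 - A_K$ and rearrange the dual constraint to $A_K(1-a_H+b_H)\le 2b_H$ for (1)$\iff$(2), then use strict monotonicity of $t\mapsto 2/(1+ct)$ for (2)$\iff$(3). Your explicit verification that $1-a_H+b_H = p'(H)+b_H>0$ and $\mu(H)>0$ (needed so the division and the monotonicity argument are legitimate) is a worthwhile point that the paper leaves implicit.
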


\begin{proof}
	For the first equivalence, note that the following are equivalent.
	\begin{align*}
		A_K + B_K(a_H-b_H) &\le a_H + b_H \\
		A_K + (1-A_K)(a_H - b_H) &\le a_H + b_H \\
		A_K (1 - a_H + b_H) &\le 2b_H \\
		A_K &\le A_H.
	\end{align*}
	The second equivalence comes from the fact that $A_K = 1/\bigl(1 + \frac{\l d}{2(1+\l)}\frac{p'(K)}{\mu(K)}\bigr)$ is a strictly decreasing function of $p'(K)/\mu(K)$.
\end{proof}

Now we claim that $A = A_{K_d} = 2(1+\l)/(1+(d+1)\l)$, $B=B_{K_d} = ((d-1)\l - 1)/(1+(d+1)\l)$ are dual feasible and give the same dual objective value as arises in the primal problem from taking $(p_C) = \bigl(p^{K_{d+1}}_C\bigr)$, the probability distribution arising from the graph $K_{d+1}$. Clearly this choice of $A,B$ gives 
\[
		\a = \frac{\l}{2(1+\l)} A_{K_d} = \frac{\l}{1+(d+1)\l} = \a(K_{d+1}).
\]
On the other hand all other dual constraints are satisfied.  By Lemma \ref{lem:dualsat}, for all $H$ with between $1$ and $d$ vertices, $A_K+B_K(a_H-b_H)\leq a_H+b_H$, since
\[
	\frac{p'(K_d)}{\mu(K_d)} = 1 \ge \frac{p'(H)}{\mu(H)}.
\]
Thus the minimum value of $\a$ is achieved for $G=K_{d+1}$.  To prove uniqueness here we need only observe that the last inequality is only tight when $H$ is also complete.  Thus, by complementary slackness, no distribution $(p_H)$ can be extremal unless it is supported on complete graphs $H$ (and the zero vertex graph). In particular no graph $G$ can be extremal unless $p_G$ is supported on these. The following lemma characterizes such graphs.

\begin{lemma}
	If $G$ is $d$-regular, and for all independent $I\of V(G)$ and all $v\in V$, we have $H=H(v,I)$ complete (or $\0$), then $G$ is a disjoint union of $K_{d+1}$s.
\end{lemma}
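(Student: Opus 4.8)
The plan is to observe that the conclusion follows already from the single instance $I=\0$ of the hypothesis. When $I=\0$ we have $I\wo N(v)=\0$, hence $N(I\wo N(v))=\0$, so that $U(v,\0)=N(v)$ and therefore $H(v,\0)=G[N(v)]$. Thus the hypothesis implies that $G[N(v)]$ is complete (or empty) for every $v\in V(G)$.

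If $d=0$ the statement is immediate, so assume $d\geq 1$. Then $N(v)$ has exactly $d\geq 1$ vertices, so the complete graph $G[N(v)]$ is a copy of $K_d$, and consequently $C_v:=\{v\}\cup N(v)$ induces a copy of $K_{d+1}$ in $G$.

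It remains to show, using $d$-regularity, that each such clique is a connected component. Fix $v$ and any $w\in C_v$. The set $C_v\wo\{w\}$ consists of $d$ vertices, each adjacent to $w$; since $w$ has degree $d$, these are all of its neighbors, so $w$ has no neighbor outside $C_v$. As $w\in C_v$ was arbitrary, no edge of $G$ leaves $C_v$, hence $C_v$ is a union of connected components of $G$; being connected, it is a single component. Therefore every component of $G$ is a $K_{d+1}$, i.e., $G$ is a disjoint union of $K_{d+1}$s. I anticipate no real obstacle here: the only points requiring any care are the degenerate case $d=0$ and the (pleasant) observation that the full strength of the hypothesis is unnecessary, since only its $I=\0$ instance is used.
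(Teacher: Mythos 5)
Your proof is correct and is essentially the paper's argument run in the forward direction: the paper argues the contrapositive, choosing a vertex $v$ with non-adjacent neighbors $u,w$ and taking $I=\{u,w\}$, for which $H(v,I)=G[N(v)]$ is neither complete nor empty. Your choice $I=\emptyset$ produces the same graph $H=G[N(v)]$, and your final paragraph just spells out the routine step (clique neighborhoods plus $d$-regularity force $K_{d+1}$ components) that the paper leaves implicit.
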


\begin{proof}
	Suppose $G$ is not a disjoint union of $K_{d+1}$s. Then there exists a vertex $v$ with non-adjacent neighbors $u,w$. Set $I=\set{u,w}$. Then $H[\set{u,w}] = E_2$ and in particular $H$ is neither $\0$ nor complete. 
\end{proof} 

\begin{proof}[Proof of Theorem~\ref{thm:main}]
	Given a $d$-regular graph $G$, consider the random variables $U$ and $H$ defined just before Lemma~\ref{lem:56}.  Clearly, $H$ is neighborly and thus 
	\[
		\a_G(\l)\geq \a_*=\a_{K_{d+1}}(\l).
	\]
	For the bound on the independence polynomial, note that
	\begin{align*}
		\log P_G(\l)&=\int_0^{\l} \frac{P'_G(t)}{P_G(t)} dt\\
		&= n\int_0^{\l} \frac{\a_G(t)}t dt\\
		&\geq n\int_0^{\l} \frac{\a_{K_{d+1}}(t)}{t} dt\\
		&= \frac{n}{d+1}\log P_{K_{d+1}}(\l).\qedhere
	\end{align*}
	
\end{proof}
	

\section{Minimizing the hard-core model for triangle-free $d$-regular graphs} 
\label{sec:dreg}

As Davies et al.~\cite{DJPR} noted, the linear programming approach is simpler when $G$ is triangle-free since the graph induced on $N(v)$, for any $v$, is empty.  Where before we had to define both $U$ and $H=G[U]$, now $U$, the set of uncovered neighbors of $v$, always induces an empty graph.  Thus, we need only keep track of how many neighbors of $v$ are uncovered.  Our approach will be to add further constraints to the linear program, thereby getting a better approximation to the actual minimum.  Pick an independent set $I$ according to the hard-core model and a vertex $v$ uniformly at random, independently of $I$.  Let $Y$ be the number of uncovered neighbors of $v$, so $\range(Y)=\set{0,1,2,\ldots,d}$.  The distribution of $Y$ is specified by the $d+1$ values $y_0,y_1,y_2,\ldots,y_d$, where $y_i=\P(Y=i)$.  

Restricting ourselves to the constraints analogous to those in $\LP(d,\l)$ unfortunately gives a rather weak bound on the hard-core model in triangle-free regular graphs.  In this section, we add a natural lower bound on $y_0$ and, in the next, we add a further constraint in the special case of $3$-regular graphs.  While neither result is best possible, we are able to give reasonably tight bounds.

The constraint we add in the general $d$-regular case is the simple one that $y_0\geq \a$.  This follows from the fact that any vertex in $I$ must necessarily have all of its neighbors uncovered.  Since, by Lemma~\ref{lem:56}, 
\[
	\a_G(\l)=\frac{\l}{d}\,\E\left(\frac{P'_H(\l)}{P_H(\l)}\right)\qquad\text{and}\qquad
	\frac{P'_{E_i}(\l)}{P_{E_i}(\l)}=\frac{i(1+\l)^{i-1}}{(1+\l)^i}=\frac{i}{1+\l},
\]
the constraint will be written
\[
	y_0-\sum_{i=1}^d \frac{i\l}{d(1+\l)} y_i\geq 0.
\]
So, the linear program, which we will denote $\LPP(d,\l)$, is
\[
\begin{array}{lrcl}
	\text{Minimize}  &\displaystyle \sum_{i=1}^d i\cdot y_i & & \\
	\text{subject to} & \displaystyle\sum_{i=0}^d y_i&=&1,\\
	& \displaystyle \sum_{i=0}^d \bigl(i-d(1+\l)^{-i}\bigr)y_i&=&0,\\
	& \displaystyle y_0-\sum_{i=1}^d \frac{i\l}{d(1+\l)}y_i&\geq& 0,\\
	& y_0, y_1, y_2, \ldots, y_d&\geq& 0.
\end{array}
\]
Note that the solution to $\LPP(d,\l)$ gives (up to a constant factor) a lower bound on $\a(G)$ for $G$ a triangle-free $d$-regular graph.

\begin{theorem}\label{thm:gensoln}
	For any $\l$, the solution to $\LPP(d,\l)$ is supported on three values.  In particular, if we define $0= m_{d-1}<m_{d-2}<\cdots<m_1$ by 
	\[
		m_i=\left(\frac{d}{i+1}\right)^{1/i}-1,
	\]
	then for $m_i\leq \l< m_{i-1}$ (where we let $m_0=\infty$), the only non-zero variables are $y_0$, $y_i$, and $y_{i+1}$, with optimal values
	\begin{align*}
		y^*_0&= \frac{\l(1+(i+1)\l)}{(1+\l)^{i+1}+\l(d+1+(d+i+1)\l)},\\
		y^*_i&= \frac{(1+\l)((i+1)(1+\l)^i-d)}{(1+\l)^{i+1}+\l(d+1+(d+i+1)\l)},\\
		y^*_{i+1}&= \frac{(1+\l)^2(d-i(1+\l)^{i-1})}{(1+\l)^{i+1}+\l(d+1+(d+i+1)\l)}.
	\end{align*}
\end{theorem}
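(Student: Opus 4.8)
The plan is to solve $\LPP(d,\l)$ directly by the same primal-dual strategy used for $\LP(d,\l)$: guess the optimal basis, construct matching primal and dual feasible solutions, and invoke weak duality (with complementary slackness if uniqueness is wanted). The claim is that the optimum is supported on exactly three consecutive-indexed variables $y_0, y_i, y_{i+1}$, where $i$ is determined by the location of $\l$ relative to the thresholds $m_i = (d/(i+1))^{1/i} - 1$. First I would record the structural fact that motivates the threshold definition: the coefficient of $y_j$ in the second (neighborly) constraint is $j - d(1+\l)^{-j}$, which is negative for small $j$ and positive for large $j$; the sign change happens between $j=i$ and $j=i+1$ precisely when $\l = m_i$ (since $i \le d(1+\l)^{-i}$ iff $(1+\l)^i \le d/i$, and similarly $(i+1)(1+\l)^i \ge d$ rearranges to $\l \ge m_i$). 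So for $m_i \le \l < m_{i-1}$, the indices $i$ and $i+1$ straddle the sign change, which is the natural candidate pair to pair against $y_0$.

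The concrete steps: (1) Set all $y_j = 0$ except $y_0, y_i, y_{i+1}$ and solve the three linear equations given by the equality constraint $\sum y_j = 1$, the neighborly constraint $\sum (j - d(1+\l)^{-j})y_j = 0$, and the $y_0$-constraint $y_0 - \sum_{j\ge 1}\frac{j\l}{d(1+\l)}y_j = 0$ taken as an equality (i.e. guessing this inequality is tight at the optimum). This is a $3\times 3$ linear system whose solution should be exactly the stated $y_0^*, y_i^*, y_{i+1}^*$; I would verify nonnegativity of all three on the interval $m_i \le \l < m_{i-1}$ — $y_i^* \ge 0$ amounts to $(i+1)(1+\l)^i \ge d$, i.e. $\l \ge m_i$, and $y_{i+1}^* \ge 0$ amounts to $d \ge i(1+\l)^{i-1}$, i.e. $\l < m_{i-1}$, so the interval is forced and these are exactly the feasibility boundaries. (2) Construct the dual. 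With objective $\sum j y_j$ and three constraints (two equalities with free multipliers, one inequality with a nonnegative multiplier), the dual has three variables and one inequality per $j \in \{0,1,\dots,d\}$; I would solve the three equations coming from the constraints indexed by $0, i, i+1$ being tight (complementary to the three nonzero primal variables) to pin down the dual variables, then check the remaining $d-2$ dual inequalities hold. (3) Confirm the primal and dual objective values agree, giving optimality by weak duality, and note that strictness of the unchecked dual inequalities off $\{0,i,i+1\}$ gives uniqueness via complementary slackness.

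The main obstacle I anticipate is step (2): verifying that the dual constraint $(\text{dual LHS})_j \le j$ holds for \emph{all} $j \notin \{0,i,i+1\}$, not just the two neighbors. The dual LHS, after substituting the pinned-down multipliers, will be some expression of the form $\text{(affine in } j) + \text{(const)}\cdot d(1+\l)^{-j}$, and one must show this lies below the line $j$ for every integer $j$ in range. This is a convexity-in-$j$ argument — the function $j \mapsto d(1+\l)^{-j}$ is convex, so the dual slack $j - (\text{dual LHS})_j$ is concave in $j$ and vanishes at the two points $i$ and $i+1$ (and one should separately check $j=0$); concavity then forces nonnegativity outside $[i, i+1]$ only if the slack is also nonnegative somewhere beyond, which needs the sign/magnitude of the leading coefficients handled carefully. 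A clean way to organize this: show the dual slack is a concave function of the real variable $j$ that is zero at $j = i$ and $j = i+1$ and has the correct sign of derivative, hence nonnegative on $[i, i+1]$ and — wait, concavity gives the wrong direction outside — so in fact one wants the slack nonnegative, meaning we need it concave and zero at $i, i+1$ forces it $\le 0$ between and we'd want $\ge 0$; the resolution is that the relevant inequality direction makes the slack a \emph{convex} function vanishing at $i$ and $i+1$, hence nonnegative outside $[i,i+1]$ and that is exactly the range of the other indices. I would pin down this convexity/sign bookkeeping as the technical heart of the argument and then the rest is routine algebra.
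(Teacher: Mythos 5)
Your proposal is correct and follows essentially the same route as the paper: guess the support $\{0,i,i+1\}$, determine the primal and dual variables by forcing the corresponding constraints to be tight, and verify feasibility together with complementary slackness, with the nonnegativity of $y_i^*$ and $y_{i+1}^*$ pinning down the interval $m_i\le\l\le m_{i-1}$ exactly as you describe. Your convexity argument for the off-support dual constraints --- the slack is an affine function of $j$ plus $dM(1+\l)^{-j}$ with $M>0$, hence convex in $j$, vanishing at $j=i$ and $j=i+1$ and therefore nonnegative at every other integer in range --- correctly supplies a verification step that the paper's own proof leaves implicit.
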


\begin{proof}
	Note that the dual program is as follows.
	\[
	\begin{array}{lrcl}
		\text{Maximize} & S &&\\
		\text{subject to} & S-dM-A &\leq& 0,\\
		& \displaystyle S+\Bigl(i-\frac{d}{(1+\l)^i}\Bigr)M-\frac{i\l}{d(1+\l)}A&\leq& i,\quad\text{for each $1\leq i\leq d$,}\\
		& A&\geq& 0.
	\end{array}
	\]
	The solution to the dual program, for $m_i\leq \l\leq m_{i-1}$ (or in the case $i=1$, $m_1\leq \l$), is
	\begin{align*}
		S^*&=\frac{d(1+\l)(1+(i+1)\l)}{(1+\l)^{i+1}+\l(d+1+(d+i+1)\l)},\\
		M^*&=\frac{(1+\l)^{i+2}}{(1+\l)^{i+1}+\l(d+1+(d+i+1)\l)},\\
		A^*&=\frac{d(1+\l)[(1+\l)^{i+1}-(1+(i+1)\l)]}{(1+\l)^{i+1}+\l(d+1+(d+i+1)\l)}.\\
	\end{align*}
	To show that we have solved both linear programs, we simply need to show that both assignments are feasible and that they satisfy complementary slackness.  We obtained $S^*$, $M^*$, and $A^*$ by insisting that the constraints in the dual corresponding to primal variables $y_0$, $y_i$, and $y_{i+1}$ were tight.  Therefore, it only remains to verify that the $y^*_i$s are feasible and that $A^*\geq 0$.  The latter is clear since $1+(i+1)\l\leq (1+\l)^{i+1}$.  We obtained the $y^*_i$ by requiring all the constraints in the primal to be tight and all variables other than $y_0$, $y_i$, and $y_{i+1}$ to be zero.  We need to check that these three variables are non-negative, but this follows from the fact that $m_i\leq \l\leq m_{i-1}$.
\end{proof}

\begin{corollary}
	If $G$ is a $d$-regular triangle-free graph, then
	\[
		\a(G)\geq y^*_0= \frac{\l(1+(i+1)\l)}{(1+\l)^{i+1}+\l(d+1+(d+i+1)\l)},
	\]
	where $m_i\leq \l<m_{i-1}$.
\end{corollary}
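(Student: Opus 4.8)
The plan is to recognize the probability vector arising from $G$ as a feasible point of the linear program $\LPP(d,\l)$ and then read off the bound from Theorem~\ref{thm:gensoln}. (I read the claimed inequality as $\a_G(\l)\ge y_0^*$, with the index $i$ determined by $m_i\le\l<m_{i-1}$ exactly as in that theorem, since $y_0^*$ depends on $\l$.)

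First I would specialize the two formulas of Lemma~\ref{lem:56} to the triangle-free setting. Since $G$ has no triangles, $H=H(v,I)$ is always the empty graph $E_Y$ on $Y$ vertices, where $Y$ is the number of uncovered neighbors of $v$; hence $P_H(\l)=(1+\l)^Y$ and $P'_H(\l)/P_H(\l)=Y/(1+\l)$. Writing $y_i=\P(Y=i)$, Lemma~\ref{lem:56} becomes
\[
\a_G(\l)=\frac{\l}{1+\l}\sum_{i=0}^{d}(1+\l)^{-i}y_i=\frac{\l}{d(1+\l)}\sum_{i=0}^{d} i\,y_i .
\]
The equality of these two expressions is, after clearing denominators, precisely the second constraint $\sum_{i=0}^{d}\bigl(i-d(1+\l)^{-i}\bigr)y_i=0$ of $\LPP(d,\l)$; the constraints $\sum_i y_i=1$ and $y_i\ge0$ hold trivially. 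For the remaining inequality constraint I would invoke the observation preceding the definition of $\LPP(d,\l)$: a vertex of $I$ has all of its neighbors uncovered, so $\{v\in I\}\subseteq\{Y=0\}$ and therefore $y_0=\P(Y=0)\ge\P(v\in I)=\a_G(\l)=\sum_{i=1}^{d}\frac{i\l}{d(1+\l)}y_i$, which is exactly the third constraint. Thus $(y_i)_{i=0}^{d}$ is feasible for $\LPP(d,\l)$.

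Second, since the primal objective of $\LPP(d,\l)$ is $\sum_{i=1}^{d} i\,y_i$, feasibility of $(y_i)$ together with Theorem~\ref{thm:gensoln} (which pins down the optimal value via the matching dual value $S^*$) gives $\sum_{i=1}^{d} i\,y_i\ge S^*$. Combining with the second identity displayed above,
\[
\a_G(\l)=\frac{\l}{d(1+\l)}\sum_{i=1}^{d} i\,y_i\ \ge\ \frac{\l}{d(1+\l)}\,S^*=\frac{\l\bigl(1+(i+1)\l\bigr)}{(1+\l)^{i+1}+\l\bigl(d+1+(d+i+1)\l\bigr)}=y_0^* ,
\]
where the last two equalities are a one-line substitution of the formulas for $S^*$ and $y_0^*$ from Theorem~\ref{thm:gensoln}, the factor $d(1+\l)$ in $S^*$ cancelling against the prefactor.

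There is no genuine obstacle: all the real content lives in Theorem~\ref{thm:gensoln}, and what remains is (a) checking that the distribution of $Y$ satisfies the three constraints of $\LPP(d,\l)$ — the only mildly delicate points being the translation of the "neighborly'' identity into the second constraint and the justification of $y_0\ge\a$ via $\{v\in I\}\subseteq\{Y=0\}$ — and (b) the routine algebraic simplification of $\tfrac{\l}{d(1+\l)}S^*$ to $y_0^*$.
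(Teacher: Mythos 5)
Your proof is correct and follows essentially the same route as the paper: the paper's one-line proof (``in our solution the constraint $y_0\ge\a$ is tight'') is just a compressed version of your computation $\a_G(\l)=\tfrac{\l}{d(1+\l)}\sum_i i\,y_i\ \ge\ \tfrac{\l}{d(1+\l)}S^*=y_0^*$, and you have usefully made explicit the feasibility check for the graph's actual distribution of $Y$ that the paper leaves implicit. One small wording slip (inherited from the paper's own text): if $v\in I$ then every neighbor of $v$ is \emph{covered} (by $v$ itself), which is what yields $\{v\in I\}\subseteq\{Y=0\}$ and hence $y_0\ge\a$ --- your displayed inclusion and the resulting constraint are nevertheless exactly right.
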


\begin{proof}
	In our solution, the constraint stating $y_0\geq \a$ is tight.
\end{proof}

As a consequence of this corollary, we do get a weak lower bound on the independence ratio of regular triangle-free graphs.  As in (\ref{eqn:ir}), we have
\[
	\frac{\a(G)}n=\lim_{\l\to \infty} \a_G(\l)\geq \lim_{\l\to \infty} \frac{\l(1+2\l)}{(1+\l)^{2}+\l(d+1+(d+2)\l)}=\frac{2}{d+3}.
\]
The best known lower bound is due to Shearer \cite{Sh}, who proved the following.
\begin{theorem}[Shearer]
	If $G$ is a triangle-free $d$-regular graph, then $\a(G)/n(G)\geq f(d)$, where $f(d)$ is given by the recurrence
	\[
		f(0)=1, \qquad f(d)=\frac{1+(d^2-d)f(d-1)}{d^2+1}.
	\]
\end{theorem}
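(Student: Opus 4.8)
The stated bound is Shearer's theorem~\cite{Sh}, and the plan is to reproduce his inductive argument. The crucial step is to prove the formally stronger statement that for \emph{every} triangle-free graph $G$, not only the regular ones,
\[
	\a(G)\;\geq\;\sum_{v\in V(G)} f\bigl(d(v)\bigr),
\]
where $d(v)$ is the degree of $v$ and $f$ is extended to all nonnegative integers by the recurrence. This yields the theorem at once in the $d$-regular case, since there the right-hand side is exactly $n\,f(d)$; note that \emph{no} convexity or Jensen-type step is needed to extract the regular case, so all of the difficulty lives in the general statement. We may also assume $G$ has no isolated vertices, since deleting one decreases both sides by $f(0)=1$.

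The proof goes by induction on $\abs{V(G)}$, with the edgeless graph as the (tight) base case. For the inductive step one picks a vertex $v$, uses triangle-freeness to see that $N(v)$ is independent (a set of $d(v)$ vertices, so $N[v]$ has $d(v)+1$ vertices), deletes a closed neighbourhood, and applies the inductive hypothesis to what remains. Already the bare bound $\a(G)\geq 1+\a(G-N[v])$ forces the shape of $f$: in the extremal case, where $G$ is $d$-regular and the $d(d-1)$ edges leaving $N(v)$ reach distinct vertices, combining it with the inductive hypothesis on $G-N[v]$ makes the comparison collapse to $f(d)\leq\bigl(1+d(d-1)f(d-1)\bigr)/(d^2+1)$, and Shearer's recurrence is precisely the choice that turns this into an equality, so the induction closes with no slack on disjoint unions of the extremal graphs. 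For a general $G$ this bare version is too weak on some configurations, such as stars; one refines it --- e.g.\ by averaging over which vertex of $N[v]$ joins the independent set --- and then carefully tracks how far the degrees drop when the chosen closed neighbourhood is removed (merely invoking that $f$ is decreasing is much too lossy). This refinement is what turns the inductive step into a single \emph{local} inequality among the degrees on, and adjacent to, $N[v]$.

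The main obstacle is establishing that local inequality in full generality --- for irregular $G$, for high-degree vertices just outside $N[v]$, and when several edges from $N(v)$ meet the same vertex. I would split it into two pieces of genuine content. First, show that the sequence $f(0),f(1),f(2),\dots$ defined by the recurrence is positive, strictly decreasing, and convex; this is a short self-contained induction using the rearrangement $f(d)-f(d-1)=\bigl(1-(d+1)f(d-1)\bigr)/(d^2+1)$. Second, feed this convexity --- together with the double-counting identity $\sum_{w\notin N[v]}\abs{N(w)\cap N(v)}=\sum_{u\in N(v)}\bigl(d(u)-1\bigr)$, itself an immediate consequence of triangle-freeness --- into the degree bookkeeping, so that the $+1$ and the gains from the degree drops always dominate the $f$-values contributed by the vertices of $N[v]$. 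Once these two are in hand, the base case, the averaging over $N[v]$, and the passage from the weighted statement to the $d$-regular one are all routine.
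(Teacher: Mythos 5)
The paper offers no proof of this statement: it is quoted directly from Shearer \cite{Sh} only as a benchmark for the weaker $2/(d+3)$ bound derived from $\LPP(d,\l)$, so the only meaningful comparison is with Shearer's original argument. Your outline does reconstruct that argument's architecture faithfully: strengthen to the degree-sequence form $\a(G)\geq\sum_{v}f(d(v))$, induct on $\abs{V(G)}$ by deleting a closed neighbourhood, and let the gains from degree drops --- quantified via monotonicity and convexity of $f$ --- pay for the $f$-values of the deleted vertices. Your check that the tight $d$-regular configuration forces exactly the stated recurrence is also correct: $1+d(d-1)\bigl(f(d-1)-f(d)\bigr)\geq (d+1)f(d)$ rearranges to $f(d)\leq\bigl(1+(d^2-d)f(d-1)\bigr)/(d^2+1)$.

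Nevertheless, what you have written is a plan rather than a proof, and the gap sits exactly where you yourself locate it: the ``single local inequality'' that closes the induction is never stated precisely, let alone established, and that inequality is essentially the entire content of Shearer's paper. Concretely: you do not specify the weights in the average over $w\in N[v]$ (uniform weights are not obviously adequate for irregular degree sequences); you do not verify that convexity of $f$ together with the identity $\sum_{w\notin N[v]}\abs{N(w)\cap N(v)}=\sum_{u\in N(v)}(d(u)-1)$ suffices when several edges from $N(v)$ meet the same outside vertex, nor do you confront the fact that the per-edge gain $f\bigl(d(w)-1\bigr)-f\bigl(d(w)\bigr)$ shrinks as $d(w)$ grows, so that high-degree vertices just outside $N[v]$ contribute very little toward covering $\sum_{x\in N[v]}f(d(x))$; and you do not account for how the surviving vertices of $N[v]$ itself change degree when $N[w]$ is deleted for $w\neq v$. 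Declaring that ``once these two are in hand \dots\ all routine'' defers precisely the difficulty. To turn this into a proof you must write the local inequality down explicitly, fix the weighting, and push it through the convexity estimates; as it stands the inductive step is verified only in the single extremal configuration.
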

This bound is stronger than ours, but the bounds are reasonably close for small $d$.


\section{Minimizing the hard-core model for triangle-free cubic graphs} 
\label{sec:pete}

In this section, we focus on the special case of cubic graphs.  As in the previous section, we add the constraint $y_0\geq \a=\a_G(\l)$.  We also add a constraint that is a lower bound on $p_3$.  Let $T_3$ be the first three levels of the infinite $3$-regular tree (so that $T_3$ has ten vertices).  Also, recall that $N^2(v)=\setof{x\in V(G)}{d(x,v)=2}$ and $N^2[v]=\setof{x\in V(G)}{d(x,v)\leq 2}$.

\begin{lemma}
	If $G$ is a triangle-free cubic graph and $Y$ is the number of uncovered neighbors of a uniformly chosen vertex with respect to an independent $I$ chosen according to the hard-core model, then
	\[
		\P(Y=3)\geq \frac{(1+\l)^3}{P(T_3)}.
	\]
\end{lemma}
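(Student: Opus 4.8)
The plan is to rewrite $\set{Y=3}$ as an avoidance event, extract a factor $(1+\l)^3$, and reduce the whole inequality to a local comparison of $G[N^2[v]]$ with $T_3$. Write $N(v)=\set{u_1,u_2,u_3}$ and set $A=N(u_1)\cup N(u_2)\cup N(u_3)$. By definition $Y=3$ precisely when every $u_i$ is uncovered, i.e. when $I\cap A=\0$. Since $G$ is cubic and triangle-free, each $N(u_i)$ is $v$ together with two vertices of $N^2(v)$ and contains no $u_j$, while conversely every vertex of $N^2(v)$ is adjacent to some $u_i$; hence $A=\set{v}\cup N^2(v)$, a set disjoint from $\set{u_1,u_2,u_3}$. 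Deleting $A$ from $G$ therefore leaves $u_1,u_2,u_3$ isolated, so
\[
	\P(Y=3\mid v)=\frac{P_{G-A}(\l)}{P_G(\l)}=(1+\l)^3\cdot\frac{P_{G-N^2[v]}(\l)}{P_G(\l)},
\]
the factor $(1+\l)^3$ recording that $u_1,u_2,u_3$ may lie in $I$ or not without affecting whether they are uncovered.

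Next I would use the elementary sub-multiplicativity $P_G(\l)\le P_{G[S]}(\l)\,P_{G-S}(\l)$, valid for every $S\of V(G)$ since each $I\in\Ind(G)$ splits as a disjoint union of $I\cap S\in\Ind(G[S])$ and $I\wo S\in\Ind(G-S)$. With $S=N^2[v]$ this gives
\[
	\P(Y=3\mid v)\ge\frac{(1+\l)^3}{P_{G[N^2[v]]}(\l)},
\]
so, averaging over the uniform vertex $v$, it suffices to prove the local bound $P_{G[N^2[v]]}(\l)\le P(T_3)$ for every vertex $v$ of a triangle-free cubic graph; this is the real content.

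For that bound, delete all edges of $G[N^2[v]]$ lying inside $N^2(v)$: this only increases the independence polynomial and leaves a bipartite graph $H$ with parts $\set{v}\cup N^2(v)$ and $\set{u_1,u_2,u_3}$, so it is enough to bound $P_H(\l)$. Expanding by the subset $S\of\set{u_1,u_2,u_3}$ met by $I$,
\[
	P_H(\l)=\sum_{S\of\set{u_1,u_2,u_3}}\l^{\abs S}(1+\l)^{1+w-\abs{N(S)}},
\]
where $w=\abs{N^2(v)}$ and $N(S)\of\set{v}\cup N^2(v)$; the four exponents that occur are $1+w$, then $w-2$ (three times), then $w-4+d_{ij}$ over the three pairs (with $d_{ij}$ the number of common neighbours of $u_i,u_j$ in $N^2(v)$), and finally $0$. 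Setting $w=6$ and all $d_{ij}=0$ gives exactly the corresponding decomposition of $P(T_3)$. Since $w\le6$, and since each $u_i$ has exactly two neighbours in $N^2(v)$ while $N^2(v)=N(u_1)\cup N(u_2)\cup N(u_3)$ forces $w\le6-d_{ij}$ by inclusion--exclusion, every exponent for $H$ is at most the matching exponent for $T_3$; comparing the sums term by term yields $P_H(\l)\le P(T_3)$, with equality only when $G[N^2[v]]\cong T_3$. I expect the only genuinely fiddly part to be this last step — identifying the four families of exponents correctly and verifying $w\le6-d_{ij}$ — while everything preceding it is formal.
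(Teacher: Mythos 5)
Your proof is correct, but it reaches the conclusion by a different mechanism than the paper in both of its key steps. Where the paper conditions on $I\cap W$ for $W=V(G)\setminus N^2[v]$, identifies the conditional law of $I\cap N^2[v]$ as a hard-core model on a pruned graph $G'=G[N^2[v]\setminus N(I\cap W)]$, and then bounds $P_{G'}(\lambda)\le P_{T_3}(\lambda)$ for every possible $G'$, you avoid conditioning entirely by using the sub-multiplicativity $P_G(\lambda)\le P_{G[S]}(\lambda)\,P_{G-S}(\lambda)$ with $S=N^2[v]$, which reduces everything to the single worst case $P_{G[N^2[v]]}(\lambda)\le P_{T_3}(\lambda)$. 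For that local bound the paper simply asserts the existence of a size-preserving injection $\Ind(G')\to\Ind(T_3)$, whereas you delete the edges inside $N^2(v)$ (which only increases the polynomial), expand the resulting bipartite graph over the subsets $S\subseteq N(v)$, and compare exponents term by term, using $\abs{N^2(v)}\le 6$ and the inclusion--exclusion bound $\abs{N^2(v)}\le 6-d_{ij}$. Your route trades the paper's brevity for explicitness: the sub-multiplicativity step is cleaner than conditioning, and your term-by-term verification actually supplies the detail the paper leaves to the reader (and, since $G'$ is an induced subgraph of $G[N^2[v]]$, your stronger local bound implies the one the paper needs). All the individual claims check out: $\{Y=3\}$ is indeed the event $I\cap(\set{v}\cup N^2(v))=\emptyset$, the three neighbors become isolated after deleting $\set{v}\cup N^2(v)$, and each exponent $1+w-\abs{N(S)}$ is a nonnegative count dominated by its $T_3$ counterpart.
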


\begin{proof}
	We first note that
	\[
		\P(Y=3)=\P(N^2[v]\cap I\of N(v)).
	\]
	This follows from the fact that all of $v$'s neighbors are uncovered if and only if $v\not\in I$ and $N^2(v)\cap I=\emptyset$.  Let $A=N^2[v]\cap I$ and $W=V(G)\setminus N^2[v]$.  We will bound $\P(A\of N(v)\given I\cap W)$.  Since we've conditioned on $I\cap W$, we know that $A$ is distributed as the hard-core model on $G'=G[N^2[v]\setminus N(I\cap W)]$, i.e., the graph on $N^2[v]$ after deleting vertices with neighbors in $W$.  Hence, 
	\[
		\P(A\of N(v)\given I\cap W)=\frac{(1+\l)^3}{P(G')},
	\]
	since there are precisely eight possible values for $A$ and the generating function for their weights is $(1+\l)^3$.	It only remains to show that, for all $\l$, we have $P(G')\leq P(T_3)$.  Note that there is a size-preserving injection from $\Ind(G')$ to $\Ind(T_3)$ and the result follows.
\end{proof}

To specify the constraint $y_0\geq \a$, note that, using Lemma~\ref{lem:23}, we have
\[
	\a=\frac{1}n\sum_{v\in V(G)} p_v=\frac{1}n\cdot \frac{\l}{1+\l}\sum_{v\in V(G)} q_v =\frac{1}n\cdot \frac{\l}{1+\l}\sum_{v\in V(G)}\sum_{j=0}^3 \frac{y_j}{(1+\l)^{j}}=\frac{\l}{1+\l}\cdot \E[(1+\l)^{-Y}],
\]
where the third equality follows from the fact that $\P(\text{$v$ is uncovered}\given Y=j)=1/(1+\l)^j$.  Also, as observed by Davies et al., we can pick a uniformly random vertex of $G$ by first picking a uniformly vertex $v$ and then picking a uniformly random neighbor of $v$ since $G$ is regular.  Thus,
\[
	\E\,Y=\frac{1}n\sum_{v\in V(G)}\sum_{u\in N(v)} q_u=3\cdot\frac{1+\l}{\l}\a.
\]
So, we have the constraint $\E\,Y=3\E[(1+\l)^{-Y}]$.  Of course, $\sum_{i=0}^3 y_i=1$.

With our added constraints, the linear program becomes the following.  We write $\L$ for $\frac{(1+\l)^3}{P(T_3)}$.
\[
\begin{array}{lrcl}
	\text{Minimize} & y_1+2y_2+3y_3 & & \\
	\text{subject to} & \displaystyle\sum_{i=0}^3 y_i&=&1,\\
	& \displaystyle \sum_{i=0}^3 \bigl(i-3(1+\l)^{-i}\bigr)y_i&=&0,\\
	& \displaystyle y_0-\sum_{i=1}^3 \frac{i\l}{3(1+\l)}y_i&\geq& 0,\\
	& y_3&\geq& \displaystyle \L,\\
	& y_0, y_1, y_2, y_3&\geq& 0.
\end{array}
\]
Once again, our strategy will be to exhibit values for the $y_i$s together with values for the dual program which satisfy complementary slackness.

The dual program is
\[
\begin{array}{lrcl}
	\text{Maximize} & S-\L B &&\\
	\text{subject to} & S-3M-A &\leq& 0,\\
	& \displaystyle S+\Bigl(1-\frac{3}{1+\l}\Bigr)M+\frac{\l}{3(1+\l)}A&\leq& 1,\\
	& \displaystyle S+\biggl(2-\frac{3}{(1+\l)^2}\biggr)M+\frac{2\l}{3(1+\l)}A&\leq& 2,\\
	& \displaystyle S+\biggl(3-\frac{3}{(1+\l)^3}\biggr)M+\frac{3\l}{3(1+\l)}A-B&\leq& 3,\\
	& A,B&\geq& 0.
\end{array}
\]
The solution to the primal linear program is as follows.
\begin{equation}\label{eqn:primsoln}
\begin{aligned}
	y^*_0&=\frac{\l(1+2\l)}{\denom}+\frac{\l^3(1+\l)^2}{(\denom)P(T_3)},\\
	y^*_1&=\frac{-1+\l+2\l^2}{\denom}+\frac{(1+7\l+9\l^2+\l^3)(1+\l)^2}{(\denom)P(T_3)},\\
	y^*_2&=\frac{2(1+\l)^2}{\denom}-\frac{(2+14\l+21\l^2+8\l^3)(1+\l)^2}{(\denom)P(T_3)},\\
	y^*_3&=\frac{(1+\l)^3}{P(T_3)}.
\end{aligned}
\end{equation}
Each of the $y_i$s is non-negative for all $\l\geq 0$.  This obvious for $y_0$ and $y_3$; for $y_1$ and $y_2$ we verify that for each, written as a rational function with denominator $(\denom)P(T_3)$, the numerator have all nonnegative coefficients.  The optimal dual solution is
\begin{align*}
	S^*&=\frac{3(1+\l)(1+2\l)}{\denom},\\
	M^*&=\frac{(1+\l)^3}{\denom},\\
	A^*&=\frac{3\l^2(1+\l)}{\denom},\\
	B^*&=\frac{3\l^2}{\denom}.
\end{align*}

To check complementary slackness, we observe that in fact we have equality in all constraints in both linear programs.  Hence, complementary slackness yields that these are each optimal solutions to the corresponding programs.  This is summarized in the following theorem.

\begin{theorem}
	If $G$ is a triangle-free cubic graph on $n$ vertices, then the hard-core model on $G$ with fugacity $\l$ satisfies
	\[
		\a\geq y^*_0=\frac{\l(1+2\l)}{\denom}+\frac{\l^3(1+\l)^2}{(\denom)P(T_3)}.
	\]
\end{theorem}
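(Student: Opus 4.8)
The plan is to follow exactly the template established in the previous two theorems: exhibit an explicit primal-feasible solution and an explicit dual-feasible solution to the linear program $\LPP(3,\l)$ augmented with the constraint $y_3\geq \L$, and then invoke LP duality (or complementary slackness) to conclude that both are optimal. Since the paper has already written down both candidate solutions in display~(\ref{eqn:primsoln}) and the subsequent display for $S^*,M^*,A^*,B^*$, the proof reduces to three verifications, which I would carry out in the following order.

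First I would check \emph{primal feasibility}. The equality constraints $\sum y_i=1$ and $\sum(i-3(1+\l)^{-i})y_i=0$ and the tight version of $y_0-\sum \frac{i\l}{3(1+\l)}y_i\geq 0$ are, by construction, the three equations used to solve for $y_0^*,y_1^*,y_2^*$ once $y_3^*=\L=(1+\l)^3/P(T_3)$ is fixed; so these hold automatically. What genuinely needs checking is nonnegativity of each $y_i^*$ for all $\l\geq 0$. As the paper notes, this is clear for $y_0^*$ and $y_3^*$; for $y_1^*$ and $y_2^*$ I would write each over the common denominator $(\denom)P(T_3)$ and observe that the numerators have all nonnegative coefficients as polynomials in $\l$ (using that $P(T_3)$ itself is a polynomial in $\l$ with positive coefficients). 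One subtlety: $\L$ depends on $\l$, so $y_3^*=\L$ should be read as setting $y_3$ equal to its lower bound, making that inequality tight; I would note this explicitly.

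Next I would check \emph{dual feasibility}: $A^*,B^*\geq 0$ (immediate, since both numerators are $3\l^2(1+\l)$ and $3\l^2$ over the positive denominator $\denom$), and the four dual inequality constraints. The claim — which I would verify — is that all four dual constraints hold \emph{with equality} for the given $S^*,M^*,A^*,B^*$. This is the one honest computation: substitute $S^*,M^*,A^*,B^*$ into each of the four linear expressions and confirm each equals its right-hand side ($0,1,2,3$ respectively), clearing the denominator $\denom$ throughout. Finally, \emph{complementary slackness / matching objectives}: since every constraint in both programs is tight, complementary slackness is trivially satisfied, so both solutions are optimal; alternatively one checks directly that the primal objective $y_1^*+2y_2^*+3y_3^*$ equals the dual objective $S^*-\L B^*$. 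Then, exactly as in Section~\ref{sec:dreg}, the observation that for a triangle-free cubic graph the random variable $Y$ (number of uncovered neighbors of a uniform $v$) is feasible for $\LPP(3,\l)$ — it satisfies $\sum y_i=1$, the neighborliness identity $\E Y = 3\E[(1+\l)^{-Y}]$, the bound $y_0\geq \a$, and the bound $y_3\geq \L$ proved in the preceding lemma — combined with $\a=\frac{\l}{1+\l}\E[(1+\l)^{-Y}]$ and the tightness of the $y_0\geq\a$ constraint in the optimal solution, yields $\a\geq y_0^*$.

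The main obstacle is purely computational: verifying that all four dual constraints are satisfied with equality, since this involves expanding $(1+\l)^2$ and $(1+\l)^3$ terms and collecting polynomial coefficients. It is entirely routine but error-prone, so I would organize it by multiplying through by $\denom=1+6\l+6\l^2$ first and checking the resulting polynomial identities degree by degree. No deeper idea is needed beyond what Sections~\ref{sec:regular} and~\ref{sec:dreg} already supply; the only genuinely new ingredient, the lower bound $y_3\geq (1+\l)^3/P(T_3)$, has been established in the lemma immediately preceding the theorem via the size-preserving injection $\Ind(G')\hookrightarrow\Ind(T_3)$.
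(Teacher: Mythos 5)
Your proposal matches the paper's proof: both exhibit the explicit primal solution (\ref{eqn:primsoln}) and the dual solution $S^*,M^*,A^*,B^*$, verify primal nonnegativity and that all constraints in both programs are tight, and conclude via complementary slackness that the LP optimum is attained with the $y_0\geq\a$ constraint tight, giving $\a\geq y^*_0$. The only difference is that you spell out more explicitly why feasibility of the actual distribution of $Y$ for a triangle-free cubic graph yields the bound, a step the paper leaves largely implicit.
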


\begin{proof}
	The solution $\a_*$ of the minimization problem above is attained for the solution (\ref{eqn:primsoln}).  Moreover, since one of our constraints is of the form $y_0\geq \a$, and this constraint is satisfied with equality, we have $\a_*=y^*_0$.  
\end{proof}

\begin{corollary}\label{cor:main}
	If $G$ is a triangle-free cubic graph on $n$ vertices, then for any $\l_0\geq 0$,
	\[
		\frac{1}n \log P_G(\l_0)\geq \int_0^{\l_0} \frac{y^*_0}{\l} d\l.
	\]
	In particular,
	\begin{equation}
		\ind(G)^{1/n}\geq 1.538339.\label{eqn:minind}
	\end{equation}
\end{corollary}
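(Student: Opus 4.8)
The plan is to prove Corollary~\ref{cor:main} in two stages, exactly paralleling the argument already given for the bound on $P_G(\l)$ in the proof of Theorem~\ref{thm:main}. The first stage is the integral inequality $\frac{1}{n}\log P_G(\l_0) \ge \int_0^{\l_0} \frac{y_0^*}{\l}\,d\l$. To obtain this, I would recall the identity $\log P_G(\l_0) = \int_0^{\l_0} \frac{P'_G(t)}{P_G(t)}\,dt = n\int_0^{\l_0} \frac{\a_G(t)}{t}\,dt$, which follows since $\a_G(t) = \frac{t P'_G(t)}{n P_G(t)}$ and $P_G(0)=1$, and then apply the lower bound $\a_G(t) \ge y_0^*(t)$ furnished by the preceding theorem pointwise for every $t \in (0,\l_0]$. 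Dividing by $n$ gives the claimed inequality; integrability near $0$ is not an issue since $y_0^*(t)/t \to $ a finite limit (indeed $y_0^* $ vanishes to order $t$ at the origin, so $y_0^*/t$ is bounded near $0$), and near $t=0$ the integrand is continuous.

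The second stage is the numerical evaluation establishing \eqref{eqn:minind}. Setting $\l_0 = 1$, we have $\ind(G) = P_G(1)$, so it suffices to show $\int_0^1 \frac{y_0^*(\l)}{\l}\,d\l \ge \log(1.538339)$. Here $y_0^*(\l)$ is the explicit rational function from \eqref{eqn:primsoln}, with $P(T_3) = P_{T_3}(\l)$ the (explicitly computable) independence polynomial of the depth-three cubic tree on ten vertices. I would write $y_0^*(\l)/\l = \frac{1+2\l}{\denom} + \frac{\l^2(1+\l)^2}{(\denom)P(T_3)}$, note that both summands are manifestly positive on $(0,1)$, and evaluate the integral numerically — this is a one-dimensional integral of an explicit rational function and can be computed to many digits (or bounded below by a Riemann sum on a fine partition, which suffices for the stated inequality with its margin).

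The main obstacle, such as it is, is purely bookkeeping: one must pin down $P_{T_3}(\l)$ explicitly. The tree $T_3$ has a root, three children, and six grandchildren (two per child); by the standard transfer-matrix / deletion recursion for independence polynomials of rooted trees — writing $P_\ell$ and $Q_\ell$ for the independence polynomial of the depth-$\ell$ subtree with the root excluded, respectively included — one gets $P_{T_3}(\l)$ as a degree-$10$ polynomial in $\l$ with positive coefficients. Once this polynomial is in hand the rest is a routine (machine-assisted) definite integral, and the gap between the true value of $\int_0^1 y_0^*/\l\,d\l$ and $\log(1.538339)$ gives room to absorb any rounding. I would therefore present the proof as: (i) the integral inequality via the logarithmic-derivative identity and the pointwise bound $\a_G \ge y_0^*$; (ii) substitution $\l_0 = 1$; (iii) the explicit form of $P_{T_3}$; (iv) the numerical estimate of the integral, concluding $\ind(G)^{1/n} \ge 1.538339$.
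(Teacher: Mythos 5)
Your proposal is correct and follows essentially the same route as the paper: the logarithmic-derivative identity $\log P_G(\l_0)=n\int_0^{\l_0}\a_G(t)/t\,dt$, the pointwise bound $\a_G(t)\ge y_0^*(t)$ from the preceding theorem, and numerical integration at $\l_0=1$ using $\ind(G)=P_G(1)$. The extra details you supply (integrability near $0$ and the explicit computation of $P_{T_3}$) are elaborations the paper leaves implicit, not a different argument.
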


\begin{proof}
	We have, as in the proof of Theorem~\ref{thm:main},
	\[
		\log P_G(\l)\geq n\int_0^{\l} \frac{\a_*(t)}{t} dt= n\int_0^{\l} \frac{y^*_0}{t} dt.
	\]
	Numerical integration up to $\l=1$ gives the second inequality.
\end{proof}

Unfortunately, in contrast to the result of Davies et al.~and our result from Section~\ref{sec:regular}, we do not determine the extremal graph for the occupancy fraction.  It should be noted that, in a recent paper, Davies et al. \cite{DJPR2} give a lower bound on the independence number of triangle-free graphs of given maximum degree $d$ that is asymptotically correct as $d\to \infty$.  For $d=3$ and $\l=1$, their bound is
\[
	P_G(1)^{1/n}\geq \exp\biggl\{\frac{W(3\log 2)^2+2W(3\log 2)}{6}\biggr\}=1.516712\ldots,
\]
where $W$ is the Lambert $W$ function.

Two graphs provide some support that our bound is not far from being optimal.  For the Petersen graph, the left hand side of (\ref{eqn:minind}) is at most $1.54199$, whereas our bound gave $1.538339$.  One might even be tempted to think that the Petersen graph is the extremal graph for the occupancy fraction for all $\l$.  However, this cannot be true as a result of Staton \cite{S} yields that a related graph, the \emph{generalized Petersen graph $\GP(7,2)$} (see Figure~\ref{fig:gp}), has a smaller independence ratio and hence, by (\ref{eqn:ir}), has smaller occupancy fraction for large $\l$.

\begin{figure}[h]
\begin{center}
	\begin{tikzpicture}[every node/.style={circle, draw, fill=black,inner sep=0pt, minimum width=5pt},
						every path/.style={thick}]
		\foreach \a in {-13, 38, 90, 141, 192, 244, 295}
		{
			\draw (\a:2) node (O\a) {};
			\draw (\a:1) node (I\a) {};
		}
		\foreach \a/\b in {-13/295, 38/-13, 90/38, 141/90, 192/141, 244/192, 295/244}
		{
			\draw (O\a) -- (I\a);
			\draw (O\a) -- (O\b);		
		}
		\foreach \a/\b in {-13/244, 90/-13, 192/90, 295/192, 38/295, 141/38, 244/141}
			\draw (I\a) -- (I\b);
		
		
	\end{tikzpicture}
\end{center}
\caption{The generalized Petersen graph $\GP(7,2)$}\label{fig:gp}
\end{figure}
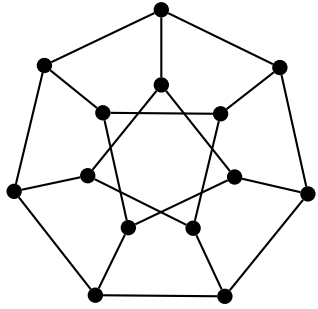

\begin{theorem}[Staton]
	If $G$ is a triangle-free cubic graph on $n$ vertices, then
	\[
		\frac{\a(G)}{n}\geq \frac{5}{14}=\frac{\a(\GP(7,2))}{n(\GP(7,2))}.
	\]
\end{theorem}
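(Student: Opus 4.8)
The plan is to establish the bound $\alpha(G)/n \geq 5/14$ for every triangle-free cubic graph $G$ via a local discharging/weighting argument, following the classical inductive approach that one finds in Staton's original work and in later simplifications (e.g. by Jones or Griggs). The cleanest route is to prove something slightly stronger by induction on $n$: for every graph $G$ with maximum degree at most $3$ that is triangle-free, there is an independent set of size at least $\tfrac{1}{14}(5n - e)$ or some similar weighted quantity that accounts for low-degree vertices; one then specializes to the $3$-regular case. First I would set up the induction by handling the trivial base cases (empty graph, graphs with an isolated vertex or a vertex of degree $\leq 1$, which can be deleted favorably) and reduce to the case where $G$ is connected with minimum degree $\geq 2$.

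The core of the argument is a case analysis on the local structure around a suitably chosen vertex $v$ of degree $3$ (or a degree-$2$ vertex if one is working with the stronger weighted statement). Because $G$ is triangle-free, $N(v)$ is an independent set of size $3$; put $v$ into the independent set, delete $N[v]$ together with the second neighborhood structure that gets ``used up,'' and apply induction to what remains. The delicate point is bookkeeping: deleting $N[v]$ removes $4$ vertices but also destroys edges incident to $N^2(v)$, potentially lowering degrees there, so one must track how much ``credit'' is gained. The constant $5/14$ is exactly what makes the induction close when the worst local configuration is analyzed — typically this is the configuration in which $v$'s three neighbors each have their other two neighbors disjoint and of degree $3$, mirroring the local structure of $\GP(7,2)$. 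I would organize the casework according to how many edges run between distinct neighbors' second-neighborhoods and how many vertices of $N^2(v)$ have degree less than $3$ after deletion, choosing in each case whether to take $v$ into the set or to take a cleverly chosen independent subset of $N^2(v)$ instead.

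The main obstacle I expect is precisely this case analysis: triangle-freeness rules out $3$-cycles but not $4$-, $5$-, or $6$-cycles through $N[v]$, and short even cycles create overlaps in the second neighborhood that make the naive deletion lose too much. Handling $4$-cycles (two neighbors of $v$ sharing a common other-neighbor) and $5$-cycles requires separate sub-cases, and it is here that one must sometimes delete a larger chunk — essentially a copy of a fragment of $\GP(7,2)$ — and verify that the ratio $5/14$ is still respected. Once the inductive step is verified in every case, the theorem follows, and sharpness is witnessed by $\GP(7,2)$ itself, which has $14$ vertices and independence number $5$; one checks directly (or cites Figure~\ref{fig:gp}) that $\alpha(\GP(7,2)) = 5$, so no better constant is possible.
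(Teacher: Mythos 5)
There is a genuine gap here: what you have written is a strategy outline, not a proof. The entire mathematical content of Staton's theorem lies in the case analysis that you explicitly defer (``the main obstacle I expect is precisely this case analysis''), and that analysis is long and delicate — Staton's original argument runs to many pages of configurations, and even the later streamlined proofs (Jones, Griggs--Murphy, Heckman--Thomas) require substantial structural work, including handling the sporadic extremal fragments of $\GP(7,2)$ that block the naive inductive step. Asserting that ``the constant $5/14$ is exactly what makes the induction close'' is not a proof that it does close; a greedy step that gains one vertex while deleting $\abs{N[v]}=4$ vertices only yields ratio $1/4<5/14$, so the whole burden is to show that in every local configuration one can do strictly better, and that is precisely what is missing.

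A second, more concrete problem is that the strengthened inductive statement you propose is not the right one. For a cubic graph $e=3n/2$, so $\tfrac{1}{14}(5n-e)=\tfrac{1}{14}\bigl(5n-\tfrac{3n}{2}\bigr)=n/4$, which does not specialize to $5n/14$; the correct weighted form (due to Heckman and Thomas) is $\a(G)\geq\tfrac{1}{7}(4n-e)$ for triangle-free graphs of maximum degree at most $3$, which for cubic graphs gives $\tfrac{1}{7}\cdot\tfrac{5n}{2}=5n/14$. Since the induction hypothesis itself has not been correctly identified, the inductive step cannot be checked even in principle. Note that the paper does not prove this theorem either — it is quoted from Staton's 1979 paper as a known result used only to motivate the conjecture — so the appropriate course here is either to cite the literature or to carry out one of the known proofs in full; the present outline does neither.
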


We therefore make the following conjecture.

\begin{conjecture}
	If $G$ is a triangle-free cubic graph on $n$ vertices, then
	\[
		P_G(\l)^{1/n}\geq \min\set{P_{\GP(5,2)}(\l)^{1/10},P_{\GP(7,2)}(\l)^{1/14}},
	\]
	where $\GP(5,2)$ is the Petersen graph.  In particular, 
	\[
		\ind(G)^{1/n}\geq \ind(\GP(5,2))^{1/10}=1.54198\ldots.
	\]
\end{conjecture}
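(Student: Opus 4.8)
Since the statement is a conjecture, what follows is a plan of attack rather than a proof; I expect a genuinely new idea is needed, and I will indicate where. The natural starting point is the occupancy-fraction machinery of Sections~\ref{sec:dreg} and~\ref{sec:pete}: enlarge the linear program by recording a finer local statistic around a uniformly random vertex $v$, impose every linear identity and inequality that a genuine triangle-free cubic graph forces on that distribution, solve the strengthened program, and then pass from $\a_G$ to $P_G(\l)^{1/n}$ by integrating the resulting occupancy-fraction bound $\a_G(t)/t$ as in Corollary~\ref{cor:main}. Concretely, in place of $Y=$ (the number of uncovered neighbours of $v$) one records the isomorphism type of $G$ restricted to the ball of radius two about $v$ (a quotient of $T_3$), or of radius three, together with the pattern of which of those vertices are covered by $I$ from outside the ball --- this is exactly the random graph $H(v,I)$ of Section~\ref{sec:regular}, carried one or two shells further out. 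For a triangle-free cubic graph these balls form an explicit short list, so one obtains a finite linear program: the two evaluations of $\a_G$ --- conditioning on $v$, and conditioning on a uniform neighbour of $v$, as in Lemmas~\ref{lem:23} and~\ref{lem:56} --- supply the ``neighborly'' equality constraint, and the lemma behind $y_3\ge\L$ generalises to a partition-function domination bound $P(G')\le P(T_k)$, via a size-preserving injection $\Ind(G')\hookrightarrow\Ind(T_k)$, for every local graph $G'$ that can occur.

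The steps, in order: (i) enumerate the local types and write down the enlarged primal and dual; (ii) adjoin the further valid constraints --- partition-function domination bounds $P(G')\le P(T_k)$ for each local type, consistency of nested local profiles, and monotonicity such as $p_x\le q_x$ at distance two; (iii) compute the optimum by exhibiting matching primal and dual feasible points and verifying complementary slackness, exactly as was done for $\LPP(3,\l)$; (iv) compare this optimum with $\a_{\GP(5,2)}(\l)$ and $\a_{\GP(7,2)}(\l)$; (v) integrate to obtain the bound on $P_G(\l)^{1/n}$.

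The difficulty lies in (iv) and (v), and it is of two kinds. First, the conjectured extremal object is not a single graph but the \emph{lower envelope} of two: the Petersen graph should be optimal for small $\l$ and $\GP(7,2)$ for large $\l$, consistent with Staton's theorem and~(\ref{eqn:ir}), since $\GP(7,2)$ has the smaller independence ratio $5/14<2/5$ and hence the smaller occupancy fraction as $\l\to\infty$. For an LP optimum to equal $\min\{\a_{\GP(5,2)}(\l),\a_{\GP(7,2)}(\l)\}$ its optimal vertex must switch as $\l$ crosses some crossover value $\l_0$, with the local profiles of both graphs feasible there; moreover these two occupancy fractions agree to high order at $\l=0$ --- one checks that $\a_G(\l)=\l-4\l^2+19\l^3+O(\l^4)$ with all three leading coefficients independent of the triangle-free cubic graph $G$ --- so the gap the LP must certify is extremely small. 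It is quite possible that no finite truncation of the local-profile LP reproduces this envelope, and that one must instead argue by stability: show that any triangle-free cubic $G$ whose occupancy fraction is within $\epsilon$ of the conjectured bound is a disjoint union of copies of $\GP(5,2)$ and $\GP(7,2)$. Second, even granting the pointwise bound $\a_G(\l)\ge\min\{\a_{\GP(5,2)}(\l),\a_{\GP(7,2)}(\l)\}$, integrating it does \emph{not} by itself give the conjectured inequality for $P_G$ once $\l>\l_0$, because the integral of a pointwise minimum is at most the minimum of the integrals. For $\l\le\l_0$ --- in particular at $\l=1$, which yields the stated $\ind(G)^{1/n}\ge\ind(\GP(5,2))^{1/10}=1.54198\ldots$ --- this is not an obstruction, since the minimum is then attained by $\GP(5,2)$ throughout $[0,\l]$ and integration gives $P_G(\l)^{1/n}\ge P_{\GP(5,2)}(\l)^{1/10}$; but for large $\l$ one would need a separate, more combinatorial argument, for instance a direct bound $P_G(\l)\ge P_{\GP(7,2)}(\l)^{n/14}$ combining Staton-type independence-ratio estimates with lower bounds on the number of near-maximum independent sets. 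Establishing this envelope behaviour, rather than any single computation, is what I regard as the main obstacle.
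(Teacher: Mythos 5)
The statement you were asked to prove is presented in the paper as a \emph{conjecture}: the authors give no proof, so there is no argument of theirs to compare yours against. You correctly recognize this and offer a programme rather than a proof, and the programme is the natural one --- it is precisely the paper's own occupancy-fraction linear-programming machinery from Sections~\ref{sec:dreg} and~\ref{sec:pete}, pushed to larger local neighbourhoods, with the ``neighborly'' constraint from Lemma~\ref{lem:56} and $T_k$-domination bounds in place of the single $y_3\geq\L$ constraint. Your supporting computations check out: $5/14<2/5$, so by~(\ref{eqn:ir}) the graph $\GP(7,2)$ does beat the Petersen graph for large $\l$; and the expansion $\a_G(\l)=\l-4\l^2+19\l^3+O(\l^4)$ is indeed the same for every triangle-free cubic $G$, since the numbers of independent sets of sizes at most $3$ are determined by $n$, the degree, and triangle-freeness, and the $n$-dependence cancels in the logarithmic derivative.

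Your two identified obstacles are genuine and worth recording explicitly. First, an LP whose optimum must trace the lower envelope of two distinct local distributions is qualitatively harder to certify than one with a single extremal graph, and it is not clear that any finite local-profile relaxation achieves that envelope; a stability argument of the kind you sketch may well be unavoidable. Second, and more structurally: even a sharp pointwise bound $\a_G(t)\geq\min\{\a_{\GP(5,2)}(t),\a_{\GP(7,2)}(t)\}$ integrates, via the identity used in Corollary~\ref{cor:main}, to something \emph{weaker} than $\min\bigl\{\tfrac{1}{10}\log P_{\GP(5,2)}(\l),\tfrac{1}{14}\log P_{\GP(7,2)}(\l)\bigr\}$ once $\l$ exceeds the crossover value, because the integral of a pointwise minimum is at most the minimum of the integrals. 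So the partition-function form of the conjecture is not even a formal consequence of the corresponding occupancy-fraction statement at large fugacity, though it does follow below the crossover --- in particular at $\l=1$, which is all that is needed for the numerical claim $\ind(G)^{1/n}\geq 1.54198\ldots$. In short: no proof is given here, none exists in the paper, and your assessment of where the difficulty lies is accurate.
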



\bibliographystyle{amsplain}
\bibliography{minhardcore}

\providecommand{\bysame}{\leavevmode\hbox to3em{\hrulefill}\thinspace}
\providecommand{\MR}{\relax\ifhmode\unskip\space\fi MR }
\providecommand{\MRhref}[2]{%
  \href{http://www.ams.org/mathscinet-getitem?mr=#1}{#2}
}
\providecommand{\href}[2]{#2}
\begin{thebibliography}{10}

\bibitem{C}
Va{\v{s}}ek Chv{\'a}tal, \emph{Linear programming}, A Series of Books in the
  Mathematical Sciences, W. H. Freeman and Company, New York, 1983.

\bibitem{CR}
Jonathan Cutler and A.~J. Radcliffe, \emph{The maximum number of complete
  subgraphs in a graph with given maximum degree}, J. Combin. Theory Ser. B
  \textbf{104} (2014), 60--71.

\bibitem{DJPR}
Ewan Davies, Matthew Jenssen, Will Perkins, and Barnaby Roberts,
  \emph{Independent sets, matchings, and occupancy fractions}, arXiv:
  1508.04675 (2015).

\bibitem{DJPR2}
\bysame, \emph{On the average size of independent sets in triangle-free
  graphs}, arXiv: 1606.01043v1 (2016).

\bibitem{GT}
David Galvin and Prasad Tetali, \emph{On weighted graph homomorphisms}, Graphs,
  morphisms and statistical physics, DIMACS Ser. Discrete Math. Theoret.
  Comput. Sci., vol.~63, Amer. Math. Soc., Providence, RI, 2004, pp.~97--104.

\bibitem{K15}
Jeff Kahn, personal communication.

\bibitem{K01}
\bysame, \emph{An entropy approach to the hard-core model on bipartite graphs},
  Combin. Probab. Comput. \textbf{10} (2001), no.~3, 219--237.

\bibitem{Sh}
James~B. Shearer, \emph{A note on the independence number of triangle-free
  graphs. {II}}, J. Combin. Theory Ser. B \textbf{53} (1991), no.~2, 300--307.

\bibitem{S}
William Staton, \emph{Some {R}amsey-type numbers and the independence ratio},
  Trans. Amer. Math. Soc. \textbf{256} (1979), 353--370.

\bibitem{Z}
Yufei Zhao, \emph{The number of independent sets in a regular graph}, Combin.
  Probab. Comput. \textbf{19} (2010), no.~2, 315--320.

\end{thebibliography}
	
\end{document}